\documentclass[a4paper,oneside,11pt]{article} 

\usepackage[utf8]{inputenc} 
\usepackage[T1]{fontenc} 
\usepackage{textcomp} 
\usepackage[english]{babel} 
\usepackage{indentfirst}
\usepackage[autolanguage]{numprint} 
\usepackage{hyperref} 
\hypersetup{colorlinks=true,linkcolor=blue,citecolor=red,urlcolor=blue}
\usepackage{graphicx} 
\usepackage{verbatim} 
\usepackage{makeidx} 
\usepackage[refpage]{nomencl} 
\usepackage{enumitem} 
\usepackage{tikz} 
\usetikzlibrary{matrix,arrows,patterns}
\usepackage{multicol}
\usepackage{fullpage}
\usepackage{comment}
\usepackage{stmaryrd}
\usepackage{mathrsfs}
\usepackage{ulem}

\usepackage{amsmath,amssymb,amsthm,amsfonts} 
\usepackage[all]{xy} 

\numberwithin{equation}{section}

\newcounter{Main}

\theoremstyle{plain} 

\newtheorem{MainThm}[Main]{Theorem} 

\swapnumbers 
\theoremstyle{definition} 
\newtheorem{Def}{Definition}[section] 
\newtheorem{Def,Thm}[Def]{Definition and theorem} 
\newtheorem{Def,Prop}[Def]{Proposition-definition} 

\theoremstyle{plain} 
\newtheorem{prop}[Def]{Proposition} 
\newtheorem{lem}[Def]{Lemma} 
\newtheorem{Thintro}{Theorem}
\newtheorem{cor}[Def]{Corollary} 
\theoremstyle{remark} 

\usepackage{xcolor}
\usepackage[all]{xy}

\newcommand{\V}{\mathrm{u}_{2a}}
\newcommand{\U}{\mathrm{u}_a}

\title{Cohomology of special unitary groups and congruence subgroups} 
\author{Claudio Bravo\footnote{Instituto de Matem\'aticas, Universidad de Talca, Talca, Chile.
Email address: claudio.bravo@utalca.cl.}} 

\date{}

\begin{document} 

\maketitle

\begin{abstract}
We prove a homotopy invariance result for the first cohomology group of the special unitary group 
$\mathrm{SU}_3(F[t])$ with coefficients in irreducible representations of $\mathrm{PGL}_2(F)$. 
The main theorem establishes that this cohomology is naturally isomorphic to the corresponding 
cohomology of $\mathrm{PGL}_2(F)$.\\
\textbf{MSC codes:} primary 20G10, 20G30, 20H05; secondary 11E57, 14L15.\\
\textbf{Keywords:} Cohomology, special unitary groups and congruence subgroups.
\end{abstract}

\section{Introduction}\label{section introduction}

The fundamental theorem of algebraic K-theory states that for each regular ring $R$ there are natural isomorphisms between the $i$-th K-theory groups $K_i(R[t]) \cong K_i(R)$, for all $i\geq 0$ (cf. \cite[Theorem 8, \S 6, Ch. 8]{Ktheory}).
The first K-theory group of a ring $R$ can be explicitly described as follows: Let $\mathrm{GL}(R)$ be the direct limit $\varinjlim \mathrm{GL}_n(R)$ given by the inclusions
$\mathrm{GL}_n(R) \hookrightarrow \mathrm{GL}_{n+1}(R)$, $A \mapsto \begin{pmatrix}
A & 0 \\
0 & 1 
\end{pmatrix}.$
The group $K_1(R)$ is isomorphic to the abelianization $\mathrm{GL}(R)^{\mathrm{ab}}$ of $\mathrm{GL}(R)$.
Since $\mathrm{GL}(R)^{\mathrm{ab}} \cong H_1\big( \mathrm{GL}(R), \mathbb{Z} \big)$, the fundamental theorem of K-theory implies that:
$$H_1 \big( \mathrm{GL}(R), \mathbb{Z} \big) \cong H_1 \big( \mathrm{GL}(R[t]), \mathbb{Z} \big).$$
The existence of homotopy invariance results in K-theory motivates research on unstable versions for the homology of algebraic groups such as $\mathrm{GL}_n$ and $\mathrm{SL}_n$.
To state the homotopy invariance question for homology in its most general form, let us write $\mathcal{G}$ for a linear (algebraic) group scheme $\mathcal{G}$ over a ring $R$, i.e., a ``family of algebraic groups'' parametrized by the points of the curve $\mathrm{Spec}(R)$.
The homotopy invariance question in homology asks under what conditions the canonical map $\mathcal{G}(R) \to \mathcal{G}(R[t])$ induces isomorphisms on the homology groups:
$$H_*\big(\mathcal{G}(R),M\big) \xrightarrow{\cong} H_*\big(\mathcal{G}(R[t]),M\big),$$
for suitable coefficient modules $M$.


In ninety-six, Knudson studied in \cite{Knudson1} the case of $\mathrm{SL}_2$ over fields $F$, where $\mathrm{char}(F)=0$, proving that 
the inclusion $\mathrm{SL}_2(F) \hookrightarrow \mathrm{SL}_2(F[t])$ induces isomorphisms between the corresponding integral 
homology groups. See \cite[Theorem 3.1]{Knudson1}. 
One year later, Knudson generalized in \cite{Knudson2} his aforementioned results to the higher-rank group $\mathrm{SL}_n$ over arbitrary infinite fields, removing the earlier characteristic zero hypothesis in $F$.

Based on \cite{Margaux}, Wendt obtains in \cite{Wendt} one of the strongest results on the homology invariance question in homology. Indeed, he proves the following:

\begin{Thintro}\cite[Theorem 1.1]{Wendt}
If $F$ is an infinite field and $\mathcal{G}$ is a connected reductive smooth group defined over $F$, then the inclusion $F \hookrightarrow F[t]$ 
induces isomorphisms between the integral homology groups 
$$H_*\big(\mathcal{G}(F),\mathbb{Z}\big) \xrightarrow{\cong} H_*\big(\mathcal{G}(F[t]),\mathbb{Z}\big),$$
whenever the order of fundamental group of $\mathcal{G}$ is invertible in $F$.
\end{Thintro}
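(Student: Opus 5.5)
The plan follows the strategy of Knudson, refined by Wendt and building on Margaux: compare homology through actions on buildings. I first use the $\mathbb{P}^1$-structure. Put $K = F(t)$ and let $\nu_\infty$ be the valuation at infinity, with $1/t$ as uniformizer. The group $\mathcal{G}(F[t])$ acts on the Bruhat--Tits building $X$ of $\mathcal{G}$ over the completion $K_\infty$. By Soulé's theorem, generalized by Margaux to reductive (isotrivial) groups, a sector $Q$ of $X$ is a simplicial fundamental domain for this action. Each vertex stabilizer is an amalgam-friendly group $\Gamma_x = \mathcal{G}(F[t]) \cap P_x$. Up to $F$-points of the torus, $\Gamma_x$ is a semidirect product $U_x \rtimes L_x(F)$, where $L_x$ is a Levi subgroup defined over $F$ and $U_x$ is a unipotent group. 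This unipotent group is an iterated extension of finite-dimensional $F$-vector spaces, coming from root subgroups with polynomial entries of bounded degree. The cone vertex of $Q$ has stabilizer exactly $\mathcal{G}(F)$.

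Next I run the equivariant spectral sequence for the action on the contractible space $X$:
$$E^1_{p,q} = \bigoplus_{\sigma \in Q_p} H_q(\Gamma_\sigma,\mathbb{Z}) \Rightarrow H_{p+q}(\mathcal{G}(F[t]),\mathbb{Z}).$$
Since $Q$ is a fundamental domain, the spectral sequence involves no orientation twists. The key step is then to show that the map $\Gamma_\sigma \to L_\sigma(F)$ induces isomorphisms on integral homology. Via Hochschild--Serre, this amounts to $H_*(L_\sigma(F), H_q(U_\sigma,\mathbb{Z})) = 0$ for $q>0$. I would establish this with the ``center kills'' argument. A one-dimensional split torus $T' \subset$ center of $L_\sigma$, defined over the infinite field $F$, acts on $U_\sigma$ through nontrivial weights. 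Because $F$ is infinite, we can choose scalars $\lambda \in T'(F)$ acting by distinct nontrivial characters. These act trivially on the homology of $L_\sigma(F)$, since they are central, yet act by multiplication by some $\lambda^n \neq 1$ on the graded pieces $H_q(U_\sigma)$. The graded pieces are computed as exterior/divided power functors of $F$-vector spaces, following Knudson's method. Consequently $E^1_{p,q}$ reduces to $\bigoplus_{\sigma} H_q(L_\sigma(F),\mathbb{Z})$.

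The final step identifies the complex $\sigma \mapsto H_q(L_\sigma(F))$ over the sector $Q$. The sector $Q$ is combinatorially a cone on the spherical building type: faces correspond to pairs of a parabolic type and a ``height''. Each face $\sigma$ has the same Levi as its projection toward the cone point, and this face poset is contractible onto the cone vertex. Hence the $E^2$-page collapses to the column $p=0$, which equals $H_q(\mathcal{G}(F),\mathbb{Z})$. The edge map is induced by the inclusion, which gives the result.

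I expect the main obstacle to be the unipotent vanishing. The hypothesis that the order of the fundamental group is invertible enters here, because we need the center of $L_\sigma$ to produce enough torus elements acting with weights whose relevant powers differ from $1$. Nonsimply connected issues, and small characteristic issues with the divided powers in the homology of $U_\sigma$, must also be controlled at this point. I would first handle $\mathcal{G}$ simply connected, possibly passing to $\mathbb{Z}[1/p]$ coefficients. Then I would descend to general $\mathcal{G}$ through the central isogeny $\tilde{\mathcal{G}} \to \mathcal{G}$, using that the kernel has order invertible in $F$.
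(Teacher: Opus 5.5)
The paper does not prove this statement; it is quoted from Wendt, so the benchmark is the Soul\'e--Margaux--Knudson--Wendt argument. You reproduce its architecture: the sector as strict fundamental domain, stabilizers $U_\sigma\rtimes L_\sigma(F)$, removal of the unipotent part by a center-kills argument, and collapse of $\sigma\mapsto H_q(L_\sigma(F),\mathbb{Z})$ onto the cone vertex. In characteristic $0$ your unipotent step works if you use a rational scalar. The abelian layers of $U_\sigma$ are $\mathbb{Q}$-vector spaces with $H_q=\Lambda^q_{\mathbb{Q}}$. Take $\lambda$ to be the cocharacter of $Z(L_\sigma)^0$ given by a multiple of the barycentre of $\sigma$; then $\lambda(2)$ acts on these layers with eigenvalues $2^m$, $m>0$.

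\textbf{The gap is characteristic $p>0$.}

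\emph{Why your step fails there.} $U_\sigma$ is a $p$-group, and $H_q(U_\sigma,\mathbb{Z})$ is built from exterior and divided powers over $\mathbb{F}_p$, not over $F$. For instance, $c\in F^*\setminus\mathbb{F}_p$ acts on $H_2(F,\mathbb{Z})=\Lambda^2_{\mathbb{F}_p}F$ by $x\wedge y\mapsto cx\wedge cy$, which is not multiplication by any scalar. The elements of $\mathbb{F}_p^*$ do act by scalars $c^m$ on homogeneous pieces, but they are too few. Indeed $\mathbb{F}_2^*=1$, and for odd $p$ one has $c^m=1$ whenever $(p-1)\mid m$. So your claim that central torus elements act by some $\lambda^n\neq 1$ on graded pieces is false for $p>0$. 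Retreating to $\mathbb{Z}[1/p]$-coefficients does not rescue it. It makes $U_\sigma$ acyclic for trivial reasons and discards exactly the $p$-torsion the integral statement is about.

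\emph{What is missing.} You need Suslin's infinite-field argument, the one giving $H_*(T(F),\mathbb{Z})\cong H_*(B(F),\mathbb{Z})$:
\begin{enumerate}
\item For $q>0$, the action of $c\in F$ through $\lambda$ on the $\mathbb{F}_p$-vector-space subquotients of $H_q(U_\sigma,\mathbb{Z})$ is a polynomial law in $c$. It has bounded degree $N$ and vanishes at $c=0$.
\item Sum this law over a finite $\mathbb{F}_p$-subspace $W\subset F$ with $(p-1)\dim W>N$; the sum is $0$.
\item Hence $z=\sum_{c\in W\setminus 0}([\lambda(c)]-[1])$ lies in the augmentation ideal, yet acts as $1-|W|\equiv 1$ on those subquotients. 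So $z$ acts invertibly on $H_q(U_\sigma,\mathbb{Z})$.
\item Each $\lambda(c)$ is central in $L_\sigma(F)$, so $z$ acts on $H_*(L_\sigma(F),H_q(U_\sigma,\mathbb{Z}))$ both invertibly and as $0$, which forces vanishing.
\end{enumerate}
This is where the infiniteness of $F$ genuinely enters.

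\textbf{Your account of the $\pi_1$ hypothesis is also off.} The cocharacter $\lambda$ has strictly positive weights on every root group in $U_\sigma$ for any reductive $\mathcal{G}$. The characteristic-$p$ difficulty already arises for $\mathrm{SL}_2$ in characteristic $2$, where $\pi_1$ is trivial. So the hypothesis plays no role in the unipotent step, and restricting first to simply connected groups does not help. In your outline it could only enter in the final descent along the central isogeny, which you leave unargued.
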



Several positive answers to the homotopy invariance question for other arithmetic groups, different from $\mathcal{G}(F[t])$, are given in works as either \cite[Ch. 4]{KnudsonBook} or \cite{Hutchinson, Knusdonelementarygroups}. 
Assuming that $F$ is a finite field, some positive answers for the homotopy invariance question are given in \cite{Soulé,Knudsonfinitefields}.

Going to the case of arbitrary regular rings, it follows from \cite{KrMc} that homotopy invariance does not hold for $H_1$ over $\mathcal{G}(R[t])$, when $\mathrm{rk}(\mathcal{G})=1$ and $R$ is an integral domain which is not a field.
In the same context, homotopy invariance fails for $H_2$  whenever $\mathrm{rk}(\mathcal{G})=2$, as discussed in \cite{Wendt2}.
Thus, the assumption that $R=F$ is a field seems crucial to obtain positive answers to the homotopy invariance question in homology.

Note that all previous results are specific to algebraic groups defined over fields $F$, which can seen as isotrivial algebraic groups schemes.
Thus, it is natural to seek for extensions to group schemes defined over projective algebraic $F$-curves, such as $\mathbb{P}^1_F$, but not on $F$.
Note that, if $\mathcal{G}$ is such a group, i.e. a $\mathbb{P}^1_F$-group scheme, then, the (abstract) group $\mathcal{G}(\mathbb{A}_F^{1})$ plays the role of $\mathcal{G}(F[t])$ in the isotrivial case.

Usually, algebraic groups (and also algebraic group schemes) are studied in families according to suitable properties that satisfy them.
Indeed, a group $\mathcal{G}$ defined over $K$ is said split if it contains a maximal torus that is $K$-isomorphic to $\mathbb{G}_m^r=\mathrm{GL}_1^r$, for certain $r \in \mathbb{Z}$.
We say also that $\mathcal{G}$ is quasi-split if it contains a Borel $K$-subgroup. Every split group is quasi-split, but not conversely (See the example of $\mathrm{SU}_3$ in \S \ref{section curves}).

Recall that each split semisimple group has a $\mathbb{Z}$-model (called its Chevalley model) according to \cite[Exp.~XXV~1.3]{SGA3-3}.
In particular, they are all isotrivial, since we can define an $F$-model by extension of scalars.
Thus, in order to study the homotopy invariance question in the context of algebraic group schemes, 
we are led to consider groups outside of this category.
The subsequent natural family of groups to consider is the category of semi-simple simply connected non-split quasi-split groups.
In this context, the first progress was described in \cite{bravohominv} for the special unitary $\mathbb{P}^1_F$-group scheme $\mathrm{SU}_3=\mathrm{SU}_{3,\mathbb{P}^1_F}$, defined in detail in \S \ref{section curves}. More specifically, for this group scheme, we have:

\begin{Thintro}\cite[Theorem 2.1]{bravohominv}\label{teo1}
Let $F$ be an infinite field with $\mathrm{char}(F) \neq 2$.
There exists an injective (and natural) homomorphism $\iota: \mathrm{PGL}_2(F) \hookrightarrow \mathrm{SU}_3(F[t])$, which induces isomorphisms:
$$\iota_*: H_* \big( \mathrm{PGL}_2(F), \mathbb{Z} \big) \xrightarrow{\cong}  H_* \big( \mathrm{SU}_3(F[t]), \mathbb{Z} \big).$$
\end{Thintro}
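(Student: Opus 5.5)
The plan is to let $\mathrm{SU}_3(F[t])$ act on a contractible simplicial complex, namely the Bruhat--Tits tree $X$ of $\mathrm{SU}_3$ over the completion $K_\infty$ of $F(t)$ at the place at infinity, and to compute the homology with the resulting equivariant spectral sequence. Since $\mathrm{SU}_3$ is a non-split quasi-split group of relative rank one, $X$ is a semi-homogeneous tree. The first step is to determine a fundamental domain for the action of $\Gamma=\mathrm{SU}_3(F[t])$ on $X$. In analogy with Soulé's and Serre's treatment of $\mathrm{SL}_2(F[t])$, this should be a ray $v_0,v_1,v_2,\dots$. For this I will use an explicit parametrization of the unipotent radical of the Borel subgroup by pairs $(x,y)$ with $x\bar x + y+\bar y=0$, together with the Cartan-type decomposition of $\Gamma$.

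The second step is to compute the vertex stabilizers $\Gamma_{v_n}$ and the edge stabilizers $\Gamma_{e_n}$, where $e_n=(v_n,v_{n+1})$. The expectation is the following.
\begin{itemize}
\item The stabilizer $\Gamma_{v_0}$ is $\mathrm{SU}_3(F[t])\cap \mathcal{G}(\mathcal{O}_\infty)$. This should be the $F$-points of a form of $\mathrm{SO}_3$, hence isomorphic to $\mathrm{PGL}_2(F)$; this is where the embedding $\iota$ comes from.
\item For $n\ge 1$, the stabilizer $\Gamma_{v_n}$ should be a semidirect product $U_n\rtimes T(F)$. Here $U_n$ is a finite-dimensional unipotent group of polynomials of bounded degree, and $T(F)\cong F^{\times}$-type torus points (for the quasi-split unitary group, $T(F)\cong \ell^\times$ with $\ell=F(\sqrt{\cdot})$).
\item The stabilizers satisfy $\Gamma_{e_n}=\Gamma_{v_n}\cap\Gamma_{v_{n+1}}$ and $\Gamma_{v_n}=\Gamma_{e_{n}}$ for $n\ge1$ up to growth of $U_n$, giving an amalgam description
$$\Gamma = \Gamma_{v_0} *_{\Gamma_{e_0}} \varinjlim_n \Gamma_{v_n},\qquad \varinjlim_n\Gamma_{v_n} = U(F[t])\rtimes T(F).$$
\end{itemize}

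The third step reduces everything, via the Mayer--Vietoris sequence of the amalgam, to showing two facts. First, the inclusions $\Gamma_{e_0}\to \Gamma_{v_1}\to \cdots$ induce isomorphisms on integral homology. Second, $\Gamma_{e_0}$, a Borel subgroup $B(F)$ of $\mathrm{PGL}_2(F)$, has the same homology as $U(F[t])\rtimes T(F)$. Once these hold, the Mayer--Vietoris sequence collapses to $H_*(\mathrm{PGL}_2(F))\cong H_*(\Gamma)$.

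The key tool for the third step is the Hochschild--Serre spectral sequence for $U_n\rtimes T(F)\to T(F)$, combined with a centre-kills argument. Since $F$ is infinite, one can pick torus elements acting on each graded piece of the unipotent filtration (the quotients $U_n/U_{n-1}$ are $F$-vector spaces twisted by characters) by nontrivial scalars. The coinvariants $H_q(U_n,\mathbb{Z})_{T(F)}$ beyond those coming from $U_0$ then vanish, by the argument used for Knudson's $\mathrm{SL}_2$ result. Here $\mathrm{char}(F)\neq 2$ is needed to split the unitary structure and to ensure that the relevant characters $\lambda\mapsto \lambda^{a}\bar\lambda^{b}$ are nontrivial on infinitely many elements.

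I expect the main obstacle to be this vanishing of $T(F)$-coinvariants on the homology of the non-abelian unipotent groups $U_n$. These groups are two-step nilpotent (a Heisenberg-type group), so their integral homology is not simply an exterior algebra. I would handle this by filtering $U_n$ by its centre, $\{(0,y): y+\bar y=0\}$, and running a second Hochschild--Serre spectral sequence. The goal is to show that every term carries a nontrivial "weight" for a suitable torus element, using the standard lemma that multiplication by $\lambda\ne 1$ acting through a rational-linear structure kills coinvariants. Controlling that the differentials respect these weights is the delicate point.
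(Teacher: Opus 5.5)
The paper does not prove this statement; it imports it from \cite{bravohominv}. The two facts it quotes from there agree with your picture: $\Gamma/\Gamma(t)\cong\mathrm{SO}_3(F)=\Gamma\cap\mathrm{SL}_3(F)\cong\mathrm{PGL}_2(F)$, and $\Gamma(t)=\ast_{\mathbb{P}^1(F)}U_J$, which is what Bass--Serre gives from $\Gamma=\mathrm{PGL}_2(F)\ast_{B(F)}\big(U(F[t])\rtimes F^{\times}\big)$. So your architecture is sensible. Mayer--Vietoris (or Hochschild--Serre for $\Gamma(t)\rtimes\mathrm{PGL}_2(F)$ plus Shapiro) does reduce the theorem to $H_p\big(F^{\times},H_q(U,\mathbb{Z})\big)=0$ for $q\geq 1$, for $U=U(F)$ and $U=U(F[t])$.

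However, both load-bearing steps are only announced. The fundamental domain and the stabilizers are what things ``should be'', and the vanishing is the part you leave open. Some details are also wrong:
\begin{itemize}
\item Since $F[\sqrt t]^{\times}=F^{\times}$, the torus is $\{\mathrm{diag}(\lambda,1,\lambda^{-1}):\lambda\in F^{\times}\}$, not $\ell^{\times}$.
\item It acts with weight $1$ on $U/Z\cong F[\sqrt t]$ and weight $2$ on the centre $Z\cong F[t]$; there are no characters $\lambda^a\bar\lambda^b$ to control.
\item $\mathrm{char}(F)\neq 2$ is needed for $L/K$ to be separable and to solve $\mathrm{Tr}(y)=-\mathrm{N}(x)$ for the trace part of $y$. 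It is not needed for the reason you give.
\end{itemize}

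The genuine gap is that your vanishing mechanism only works in characteristic $0$, while the statement covers every infinite $F$ with $\mathrm{char}(F)\neq2$.

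In characteristic $0$ the argument goes through, and the differentials are not delicate. Take $\lambda=2\in\mathbb{Q}^{\times}\subset F^{\times}$; it acts on $E^2_{ij}=\wedge^i_{\mathbb{Q}}(U/Z)\otimes\wedge^j_{\mathbb{Q}}Z$ by $2^{i+2j}$. Because the differentials are equivariant, $\lambda-1$ stays invertible on $E^\infty$ and hence on $H_q(U,\mathbb{Z})$, and centre-kills finishes. This is also the nature of the Knudson $\mathrm{SL}_2$ argument you invoke, which is a characteristic-$0$ result.

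In characteristic $p$ there is no rational-linear structure. $H_q(U,\mathbb{Z})$ is $p$-torsion, and elements of $F^{\times}\setminus\mathbb{F}_p^{\times}$ need not act on it by scalars. Elements of $\mathbb{F}_p^{\times}$ can act trivially: for $p=3$, $\{\pm1\}$ acts trivially on $H_2(V,\mathbb{Z})=\wedge^2V$ for the weight-one action on an $F$-vector space $V$. So ``every term carries a nontrivial weight'' gives nothing.

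What is missing is a genuinely infinite-field argument, for example the standard polynomial-functor one:
\begin{itemize}
\item $H_q$ of abelian groups is a polynomial functor of degree $\leq q$.
\item Hence $\mu\mapsto(\mu\cdot)_*$ is a polynomial map of degree $\leq q$ from $F$ to $\mathrm{End}\,H_*\big(F^{\times},H_q(V)\big)$.
\item By centre-kills this map is the identity at the values of the weight character, and it is $0$ at $\mu=0$.
\item For weight one, choosing $q+1$ elements of $F$ with all nonempty partial sums nonzero (this is where infiniteness of $F$ is used) forces the group to vanish.
\end{itemize}
This still has to be pushed through the two-step nilpotent $U$, where the weights $1$ on $U/Z$ and $2$ on $Z$ are tied to a single $\lambda$. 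Until that is done, your argument covers at most characteristic $0$, and even there only modulo the unproved computation of the quotient ray and its stabilizers.
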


It is not hard to see that $\mathrm{PGL}_2(F)$ is isomorphic to the group $\mathrm{SU}_3(\mathbb{P}^1_F)$ of $\mathbb{P}^1_F$-points of $\mathrm{SU}_3$. So, Theorem \ref{teo1} can be rephrased by saying that the natural inclusion $\mathrm{SU}_3(\mathbb{P}^1_F) \hookrightarrow \mathrm{SU}_3(\mathbb{A}^1_F)$ induces isomorphism between the involved integral homology groups. \\

Note that the previous results address the question of homotopy invariance for the coefficient module $\mathbb{Z}$ with the trivial action.
In what follows, we focus on the case of non-trivial coefficients.
Specifically, in \cite{knudsontwistedcoh}, Knudson studies the homotopy invariance of the first cohomology group of $\mathrm{SL}_n(F[t])$ with coefficients in an irreducible rational representation $V$ of $\mathrm{SL}_n(F)$.
In this setting, $\mathrm{SL}_n(F[t])$ acts on $V$ via the evaluation homomorphism $\mathrm{SL}_n(F[t]) \to \mathrm{SL}_n(F)$.
He obtains the following result:

\begin{Thintro}\cite[Theorem 5.2]{knudsontwistedcoh}\label{teo2}
Let $F$ is a field with $\mathrm{char}(F)=0$, and let us denote by $\mathrm{Ad}$ the adjoint representation of $\mathrm{SL}_n(F)$.
The first cohomology group $H^1\big(\mathrm{SL}_n(F[t]),V\big)$ satisfies:
\begin{equation}\label{eq 1}
H^1\big(\mathrm{SL}_n(F[t]),V\big)=  \begin{cases}
H^1\big(\mathrm{SL}_n(F),V\big), & \text{if } V\neq \mathrm{Ad}, \\
H^1\big(\mathrm{SL}_n(F),V\big) \oplus F^{\infty} & \text{if } V=\mathrm{Ad}, n=2, \\
H^1\big(\mathrm{SL}_n(F),V\big)\oplus F, & \text{if } V= \mathrm{Ad}, n \geq 3.
\end{cases} 
\end{equation}
\end{Thintro}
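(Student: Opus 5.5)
The plan follows Knudson's strategy: decompose $\mathrm{SL}_n(F[t])$ as an amalgam and compute $H^1$ through the Mayer--Vietoris sequence. For $n=2$, Nagao's theorem gives
$$\mathrm{SL}_2(F[t]) \cong \mathrm{SL}_2(F) *_{B(F)} B(F[t]),$$
where $B$ denotes the upper triangular Borel subgroup. For $n\ge 3$ one uses the action of $\mathrm{SL}_n(F[t])$ on the Bruhat--Tits building of $\mathrm{SL}_n(F((t^{-1})))$. By Soulé's theorem, a sector is a fundamental domain, so $\mathrm{SL}_n(F[t])$ is the colimit of the stabilizers of its faces. These stabilizers are semidirect products $L(F)\ltimes U_P(F[t])_{\le d}$, with Levi factor over $F$ and unipotent radical given by bounded-degree polynomial entries.

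The equivariant spectral sequence (or Mayer--Vietoris for $n=2$) yields an exact sequence
$$0\to V^{\mathrm{SL}_n(F)}\to \cdots \to H^1(\mathrm{SL}_n(F[t]),V)\to \textstyle\prod_\sigma H^1(\Gamma_\sigma,V)\to \cdots,$$
where the terms in degree $0$ are controlled by invariants. For nontrivial irreducible $V$ these invariants vanish, since $V^{L(F)}$ is small.

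Each stabilizer is treated with the Hochschild--Serre sequence for $1\to U\to \Gamma_\sigma\to L(F)\to 1$:
$$0\to H^1(L(F),V^{U})\to H^1(\Gamma_\sigma,V)\to H^0\big(L(F),\mathrm{Hom}(U^{\mathrm{ab}},V)\big).$$
Here $U^{\mathrm{ab}}$ is a sum of root spaces $F\, t^k$, and $L(F)$, hence its torus, acts on the root space for $\alpha$ by the character $\alpha$. So $\mathrm{Hom}(U^{\mathrm{ab}},V)^{T}$ is nonzero only when $\alpha$, or more precisely $-\alpha$ after dualizing, is a weight of $V$ that also appears in the relevant $L$-invariant way. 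This is the place where the dichotomy between $V\ne\mathrm{Ad}$ and $V=\mathrm{Ad}$ arises. In characteristic $0$, a highest-weight argument shows that an $L(F)$-equivariant homomorphism from a root module to $V$ forces $V$ to contain the adjoint weights in a specific configuration. Among irreducible representations, only $\mathrm{Ad}$ admits such homomorphisms coming from the degree-$k$ pieces $t^k\mathfrak g$; these are the derivation-type classes $g(t)\mapsto \tfrac{d}{dt}g\,g^{-1}$ evaluated suitably. For $V\ne \mathrm{Ad}$ the unipotent contributions vanish, so the restriction $H^1(\mathrm{SL}_n(F[t]),V)\to H^1(\mathrm{SL}_n(F),V)$ is injective. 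It is also surjective, split by the evaluation map $t\mapsto 0$.

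For $V=\mathrm{Ad}$ one must count the surviving classes. For $n=2$, $B(F[t])$ has abelian unipotent part $F[t]$, and the homomorphisms $t^k\mapsto$ (a multiple of the relevant root vector) for each $k\ge 1$ are $T$-equivariant. They give independent classes, one for each degree, which produces $F^{\infty}$. For $n\ge3$ the commutator relations $[x_{ij}(at^k),x_{jl}(bt^m)]=x_{il}(abt^{k+m})$ link the degrees. This forces a cocycle on the higher-degree pieces to be determined by its value in degree one, so only a single one-dimensional family survives, namely the derivative cocycle $g\mapsto g'(0)g(0)^{-1}$ twisted appropriately. That family gives the extra $F$. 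I expect the main obstacle to be this compatibility computation: one must check that the constraints across different faces of the sector (equivalently, the Steinberg relations) cut the product of stabilizer cohomologies down to exactly one copy of $F$. One must also show that this copy is not killed by the coboundary from $V$, which I would verify by evaluating on explicit elementary matrices.
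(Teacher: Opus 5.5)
The paper does not prove this statement. It quotes it from Knudson and only names his two methods: the equivariant spectral sequence for the Bruhat--Tits building, and the Hochschild--Serre sequence of $1\to\Gamma(t)\to\mathrm{SL}_n(F[t])\to\mathrm{SL}_n(F)\to1$ with $\Gamma(t)=\ker(\mathrm{ev}_0)$. Your plan is the first method.

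\emph{The case $n=2$.} Here your argument is essentially sound once you split $U(F[t])=U(F)\times U(tF[t])$. In Nagao's amalgam, $B(F[t])=B(F)\ltimes U(tF[t])$ and $U(tF[t])$ acts trivially on $V$. So Mayer--Vietoris gives
$H^1(\mathrm{SL}_2(F),V)\oplus\mathrm{Hom}_{T(F)}\bigl(tF[t],V^{U(F)}\bigr)$.
These are homomorphisms of abelian groups, not linear maps. Comparing the weight $a^2$ on $tF[t]$ with the weight $a^m$ on the highest-weight line of $\mathrm{Sym}^m$ in characteristic $0$ leaves only $m=2$. In that case you get $\mathrm{Hom}_F(tF[t],F)\cong F^{\infty}$.

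\emph{The case $n\ge3$.} Here there are genuine gaps.
\begin{itemize}
\item Your Hochschild--Serre step for a stabilizer uses the wrong normal subgroup. $U=U_P(F[t])_{\le d}$ acts on $V$ through $\mathrm{ev}_0$, i.e.\ through its constant terms $U_P(F)$, and these act nontrivially on every nontrivial irreducible $V$. So $H^1(U,V)\neq\mathrm{Hom}(U^{\mathrm{ab}},V)$, and the weight bookkeeping built on $\mathrm{Hom}(U^{\mathrm{ab}},V)^{T}$ has no basis. The relevant extension is $1\to\Gamma_\sigma\cap\ker(\mathrm{ev}_0)\to\Gamma_\sigma\to P_\sigma(F)\to1$.
\item A smaller point: the degree-$0$ row vanishes because every cell stabilizer of the sector contains $B(F)$ and $V^{B(F)}=0$. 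It is not because $V^{L(F)}$ is small; for instance $V^{T(F)}\neq0$ when $V=\mathrm{Ad}$.
\item Most seriously, the content of the theorem is asserted rather than proved: that irreducible $V\neq\mathrm{Ad}$ contribute nothing, and that exactly one copy of $F$ survives for $\mathrm{Ad}$. Torus weights cannot single out $\mathrm{Ad}$ when $n\ge3$. Every nontrivial irreducible $V$ whose highest weight lies in the root lattice has all roots among its weights. What distinguishes $\mathrm{Ad}$ is equivariance under more than the torus, ultimately under $\mathrm{SL}_n(F)$. In your framework that is exactly the cross-face gluing you leave as ``the main obstacle''.
\end{itemize}

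The missing idea is to work with the congruence subgroup rather than the sector. This is Knudson's second method and also what the paper does for $\mathrm{SU}_3$ (Lemma~\ref{lemma 2}). The extension splits and $\Gamma(t)$ acts trivially on $V$, so
$H^1(\mathrm{SL}_n(F[t]),V)\cong H^1(\mathrm{SL}_n(F),V)\oplus\mathrm{Hom}_{\mathrm{SL}_n(F)}\bigl(\Gamma(t)^{\mathrm{ab}},V\bigr)$.
\begin{itemize}
\item For $n=2$, Nagao gives $\Gamma(t)=\ast_{\mathbb{P}^1(F)}U(tF[t])$. The second summand is then $\mathrm{Hom}_{B(F)}(tF[t],V)$, as in the proof of Theorem~\ref{main teo 0}, which recovers your computation.
\item For $n\ge3$, the map $1+tA+O(t^2)\mapsto A$ is an $\mathrm{SL}_n(F)$-equivariant isomorphism $\Gamma(t)/\Gamma(t^2)\cong\mathfrak{sl}_n(F)$.
\item The relations $x_{ij}(t^2f)=[x_{ik}(t),x_{kj}(tf)]$, for $k\neq i,j$, put $E_n(F[t],t^2F[t])$ inside the normal subgroup $[\Gamma(t),\Gamma(t)]$.
\item By the standard commutator formula, $[E_n(F[t]),\Gamma(t^2)]\subseteq E_n(F[t],t^2F[t])$. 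Hence $\mathrm{SL}_n(F)$ acts trivially on the kernel $\Gamma(t^2)/[\Gamma(t),\Gamma(t)]$ of $\Gamma(t)^{\mathrm{ab}}\to\mathfrak{sl}_n(F)$.
\item So for nontrivial irreducible $V$, every equivariant map $\Gamma(t)^{\mathrm{ab}}\to V$ factors through $\mathfrak{sl}_n(F)$. The root-space weight argument makes it $F$-linear.
\item Schur's lemma then gives $F$ for $V=\mathrm{Ad}$, spanned by your cocycle $g\mapsto g'(0)g(0)^{-1}$, and $0$ for other nontrivial irreducible $V$.
\item For trivial $V$ the summand vanishes, because $(\Gamma(t)^{\mathrm{ab}})_{\mathrm{SL}_n(F)}$ is a direct summand of $\mathrm{SL}_n(F[t])^{\mathrm{ab}}=0$.
\end{itemize}
This single abelianization argument replaces the unverified gluing over the sector.
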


In order to prove the previous result, Knudson takes two approaches. The first one is to use the spectral sequence associated to the action of $\mathrm{SL}_n(F[t])$ on the Bruhat-Tits building associated to $\mathrm{SL}_n$ at $F(\!(t^{-1})\!)$. 
The second approach is to use the Hochschild-Serre spectral sequence associated with the group extension:
$$ 1 \to K \to \mathrm{SL}_n(F[t]) \xrightarrow{t=0}  \mathrm{SL}_n(F) \to 1,$$
where $K$ is a principal congruence subgroup of $\mathrm{SL}_n(F[t])$. \\

The main goal of this article is to study the homotopy invariance of the first cohomology group of $\mathrm{SU}_3(F[t])$ over non-trivial modules. 
In order to introduce our main results, let us write the following definitions.
Let $A,B$ be two $G$-modules and let $\mathrm{Hom}_{G}(A,B)$ be the group of $G$-invariant homomorphisms $A\to B$.
Let $B(F) \cong F^{*} \rtimes F$ be the group of upper triangular matrices in $\mathrm{PGL}_2(F)$.
Then $B(F)$ acts on the ideal $J:=\sqrt{t} F[\sqrt{t}]$ via $(a,b) \cdot f=af$, for $a\in F^{*}$, $b\in F$ and $f \in J$.
In particular, for any $\mathrm{PGL}_2(F)$-module $M$, the group $\mathrm{Hom}_{B(F)}\big(J, M\big)$ is well defined.
This group measures how far is $H^1\big( \mathrm{SU}_3(F[t]) , M \big)$ from being $H^1\big( \mathrm{PGL}_2(F), M \big)$, as the next result shows:

\begin{MainThm}\label{main teo 0}
Let $\mathrm{SU}_3$ be the $\mathbb{P}^1_F$-group scheme described above (see \S \ref{section curves} for details) and write $ \mathrm{SU}_3(F[t])=\mathrm{SU}_3(\mathbb{A}^1_F)$.
For any field $F$ with $\mathrm{char}(F)\neq 2$ and any $\mathrm{PGL}_2(F)$-module $M$, we have:
$$H^1\big( \mathrm{SU}_3(F[t]) , M \big) \cong H^1\big( \mathrm{PGL}_2(F), M \big) \oplus \mathrm{Hom}_{B(F)}\big(J, M\big).$$
\end{MainThm}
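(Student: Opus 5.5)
We argue through the Hochschild--Serre spectral sequence of a split group extension, in the spirit of Knudson's second approach for $\mathrm{SL}_n(F[t])$. First we realize $\mathrm{SU}_3(F[t])$ as a semidirect product. Evaluation at the ``origin'' (the fixed point of the quadratic extension $F[\sqrt{t}]/F[t]$ defining $\mathrm{SU}_3$) gives a retraction $\pi:\mathrm{SU}_3(F[t])\to \mathrm{PGL}_2(F)$. Together with the section $\iota$ of Theorem~\ref{teo1}, this yields
$$1\to K\to \mathrm{SU}_3(F[t])\xrightarrow{\pi}\mathrm{PGL}_2(F)\to 1,$$
where $K=\ker\pi$ is the congruence subgroup. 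Here $M$ is a $\mathrm{SU}_3(F[t])$-module through $\pi$, so $K$ acts trivially on $M$.

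Since the extension splits and $K$ acts trivially, the five-term exact sequence becomes a short exact sequence
$$0\to H^1(\mathrm{PGL}_2(F),M)\to H^1(\mathrm{SU}_3(F[t]),M)\to H^1(K,M)^{\mathrm{PGL}_2(F)}\to 0.$$
The map $d_2$ into $H^2(\mathrm{PGL}_2(F),M)$ vanishes because $\pi^*$ is injective, since $\iota^*\pi^*=\mathrm{id}$. The restriction through $\iota$ splits the sequence. Moreover,
$$H^1(K,M)^{\mathrm{PGL}_2(F)}=\mathrm{Hom}(K^{\mathrm{ab}},M)^{\mathrm{PGL}_2(F)}=\mathrm{Hom}_{\mathrm{PGL}_2(F)}(K^{\mathrm{ab}},M).$$
It therefore remains to prove
$$\mathrm{Hom}_{\mathrm{PGL}_2(F)}(K^{\mathrm{ab}},M)\cong \mathrm{Hom}_{B(F)}(J,M).$$

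The natural tool for this is a description of $\mathrm{SU}_3(F[t])$ as an amalgam. We would obtain it from the action on the Bruhat--Tits tree at the place at infinity, or from the results of \cite{bravohominv}. The expected form is $\mathrm{SU}_3(F[t])\cong \mathrm{PGL}_2(F)*_{B(F)}\mathcal{B}$, where $\mathcal{B}$ is the group of upper-triangular points: a torus $F^*$ extended by the unipotent group $U(F[t])$ of pairs $(\mathrm{u}_a,\mathrm{u}_{2a})$ with coordinates in $F[\sqrt t]$. The reduction map sends $\mathcal{B}$ onto $B(F)$, with kernel $\mathcal{B}_1$ consisting of the unipotent elements whose $\mathrm{u}_a$-coordinate lies in $J$, together with the appropriate $\mathrm{u}_{2a}$-part.

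The amalgam gives $K$ as the normal closure of $\mathcal{B}_1$. By Bass--Serre theory, $K$ is a free product of $\mathrm{PGL}_2(F)$-conjugates of $\mathcal{B}_1$, indexed by $\mathrm{PGL}_2(F)/B(F)$. Hence $K^{\mathrm{ab}}$ is the induced module $\mathrm{Ind}_{B(F)}^{\mathrm{PGL}_2(F)}\mathcal{B}_1^{\mathrm{ab}}$. Frobenius reciprocity (Shapiro) then gives
$$\mathrm{Hom}_{\mathrm{PGL}_2(F)}(K^{\mathrm{ab}},M)=\mathrm{Hom}_{B(F)}(\mathcal{B}_1^{\mathrm{ab}},M).$$

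Finally we compute $\mathcal{B}_1^{\mathrm{ab}}$ as a $B(F)$-module. The commutator formula for the root subgroups of $\mathrm{SU}_3$ expresses $[\mathrm{u}_a(x),\mathrm{u}_a(y)]$ through $\mathrm{u}_{2a}(x\bar y-\bar x y)$. Combining this with commutators against the torus and constant elements, we expect the $\mathrm{u}_{2a}$-part to be killed. The expected abelianization is then $J$ via the $\mathrm{u}_a$-coordinate, with $B(F)$ acting through multiplication by $a$ and the unipotent part acting trivially. This matches the action defined before the theorem.

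We expect this last computation to be the main obstacle. It requires identifying the exact commutator subgroup of $\mathcal{B}_1$ in the presence of the anisotropic norm/trace relations. It also requires checking that the $B(F)$-action on the quotient is exactly $af$, up to identifications. The characteristic assumption $\mathrm{char}(F)\neq 2$ enters here, both to split traces and norms and to ensure $\mathrm{u}_{2a}(J\text{-part})$ lies in the commutator subgroup. If the direct commutator computation is messy, we would first try to check it on generators $\mathrm{u}_a(\sqrt t^{\,k})$ and $\mathrm{u}_{2a}$ of each degree, using the torus action to scale.
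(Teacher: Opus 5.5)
\textbf{Verdict: genuine gap, and it cannot be closed.} Up to the last step your route is the paper's. You use the split extension, the vanishing of $d_2$ (your reason, $\iota^*\pi^*=\mathrm{id}$, is the right one), and $H^1(K,M)^{\mathrm{PGL}_2(F)}=\mathrm{Hom}_{\mathrm{PGL}_2(F)}(K^{\mathrm{ab}},M)$. You then use the free product $K=\ast_{\mathbb{P}^1(F)}U_J$ (your $\mathcal{B}_1$ is the paper's $U_J$) and Frobenius reciprocity, which leave $\mathrm{Hom}_{B(F)}(U_J^{\mathrm{ab}},M)$.

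The gap is the step you flag: $U_J^{\mathrm{ab}}\not\cong J$. Commutators of $U_J$ with torus or constant elements lie in $[\mathcal{B},U_J]$, not in $[U_J,U_J]$, so they do not affect $U_J^{\mathrm{ab}}$. Nor may you quotient by them inside $\mathrm{Hom}_{B(F)}(U_J^{\mathrm{ab}},M)$: a $B(F)$-map $\psi$ sends $bxb^{-1}x^{-1}$ to $b\cdot\psi(x)-\psi(x)$, which vanishes only for trivial $M$. Within $U_J$ one has $[\mathrm{u}_a(x,y),\mathrm{u}_a(x',y')]=\mathrm{u}_{2a}\big(\pm(x\bar{x}'-\bar{x}x')\big)$. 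Writing $x=\sqrt{t}\,\xi$ and $x'=\sqrt{t}\,\xi'$ gives $x\bar{x}'-\bar{x}x'=-t(\xi\bar{\xi}'-\bar{\xi}\xi')\in t\sqrt{t}F[t]$. Hence $[U_J,U_J]=\mathrm{u}_{2a}(t\sqrt{t}F[t])$; equality follows from $x=\sqrt{t}$, $x'=tp(t)$. So $U_J^{\mathrm{ab}}$ is an extension of $J$ by $\sqrt{t}F[t]/t\sqrt{t}F[t]\cong F$, spanned by the classes of $\mathrm{u}_{2a}(c\sqrt{t})$. On this $F$, $\mathrm{diag}(\lambda,1,\lambda^{-1})$ acts by $\lambda^{2}$. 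Also $U(F)$ acts nontrivially on $U_J^{\mathrm{ab}}$: conjugating $\mathrm{u}_a(\sqrt{t},t/2)$ by $\mathrm{u}_a(c,-c^2/2)$ adds the class of $\mathrm{u}_{2a}(\pm 2c\sqrt{t})$. The paper's Lemma~\ref{lemma Uj ab} has exactly this flaw. Its commutator expression for $\mathrm{u}_{2a}(\sqrt{t}p(t))$ uses $\mathrm{u}_a(p(t)/2,\dots)$, which lies in $U_J$ only when $p(0)=0$.

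The extra piece genuinely contributes, so the identification $\mathrm{Hom}_{\mathrm{PGL}_2(F)}(K^{\mathrm{ab}},M)\cong\mathrm{Hom}_{B(F)}(J,M)$ is false, not merely unproved. Here is a counterexample.
\begin{itemize}
\item Take $\mathrm{char}(F)=0$ and let $\omega$ be the antidiagonal Gram matrix of $h$. Set $M_0=\{A\in M_3(F): A^{T}\omega=\omega A,\ \mathrm{tr}A=0\}$, with $\mathrm{PGL}_2(F)\cong\mathrm{SO}_3(F)$ acting by conjugation. This is the $5$-dimensional irreducible $\mathrm{Sym}^4V_0$.
\item Expand $g\in K$ as $g=1+\sqrt{t}A_1(g)+tA_2(g)+\cdots$. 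The map $g\mapsto A_1(g)$ is a homomorphism $K\to M_0$; the conditions $g^{T}\omega\bar{g}=\omega$ and $\det g=1$ force $A_1(g)\in M_0$.
\item This map is equivariant for conjugation by $\Gamma$ acting through $\pi$, and $A_1(\mathrm{u}_{2a}(\sqrt{t}))=E_{13}\neq 0$.
\item On the other hand, any $B(F)$-map $\psi:J\to M_0$ lands in $M_0^{U(F)}=FE_{13}$. There $\mathrm{diag}(\lambda,1,\lambda^{-1})$ acts by $\lambda^{2}$ rather than $\lambda$, so $4\psi(x)=\psi(2x)=2\psi(x)$ and $\psi=0$.
\end{itemize}
Hence $\mathrm{Hom}_{\mathrm{PGL}_2(F)}(K^{\mathrm{ab}},M_0)\neq 0=\mathrm{Hom}_{B(F)}(J,M_0)$. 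Moreover, the cocycle $ks\mapsto A_1(k)$, with $k\in K$ and $s\in\iota(\mathrm{PGL}_2(F))$, gives a class in $H^1(\Gamma,M_0)$ outside the image of inflation, even though $\mathrm{Hom}_{B(F)}(J,M_0)=0$. What your argument, and the paper's, actually proves is the formula with $J$ replaced by the $B(F)$-module $U_J^{\mathrm{ab}}$.
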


By using the previous result, in \S \ref{section cohomology} we concentrate on describing the first cohomology group $H^1\big( \mathrm{SU}_3(F[t]), V \big)$, where $V$ is an irreducible representation of $\mathrm{PGL}_2(F)$ (or equivalently, an irreducible $\mathrm{SU}_3(F[t])$-representation with a trivial action of a suitable congruence subgroup, according to Lemma \ref{lemma equiv rep}).
In this way, we accomplish both, extending Theorem \ref{teo1} to non-trivial modules and providing an analog of Theorem \ref{teo2} for the non-split quasi-split group $\mathrm{SU}_3$, via next result:

\begin{MainThm}\label{main teo 1}
Let $F$ be a field with $\mathrm{char}(F)=0$ and let $V$ be an irreducible representation of $\mathrm{PGL}_2(F)$.
In the notations of Theorem \ref{main teo 0}, we have:
$$H^1\big( \mathrm{SU}_3(F[t]), V \big) \cong H^1(\mathrm{PGL}_2(F),V).$$
\end{MainThm}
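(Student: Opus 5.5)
By Theorem \ref{main teo 0}, applied with $M=V$, it suffices to show that $\mathrm{Hom}_{B(F)}\big(J,V\big)=0$ for every irreducible representation $V$ of $\mathrm{PGL}_2(F)$ when $\mathrm{char}(F)=0$. Recall how $B(F)\cong F^{*}\rtimes F$ acts on $J=\sqrt{t}F[\sqrt{t}]$. The unipotent part $b\in F$ acts trivially, and the torus part $a\in F^*$ acts by scalar multiplication $f\mapsto af$.

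Let $\varphi\in\mathrm{Hom}_{B(F)}(J,V)$ and $f\in J$. The plan is to extract two constraints on $v=\varphi(f)$. First, since the unipotent radical $U(F)\cong F$ acts trivially on $J$, we get $u\cdot v=v$ for all $u\in U(F)$, so $v\in V^{U(F)}$. Second, the torus element $a$ acts on $J$ by multiplication by $a$, so $\mathrm{diag}(a,1)\cdot v=\varphi(af)=a\,\varphi(f)=a v$. Here the additivity of $\varphi$ and the $F$-linear structure must be handled with care. If $\varphi$ is only additive, the relation still holds for $a\in\mathbb{Q}^*\subset F^*$, which suffices below. Thus $v$ is a $U$-invariant vector on which the torus $T(F)=\{\mathrm{diag}(a,1)\}$ acts through the character $a\mapsto a$, at least on $\mathbb{Q}^*$, or on all of $F^*$ if $\varphi$ is $F$-linear.

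Next we use the structure of irreducible representations in characteristic $0$. In the setting of Knudson, these are the rational irreducible representations, namely the even symmetric powers $V=\mathrm{Sym}^{2n}(F^2)\otimes\det^{-n}$, since these are the ones that factor through $\mathrm{PGL}_2$. For such $V$ the space $V^{U}$ is one-dimensional, spanned by a highest weight vector $x^{2n}$. On this vector, $\mathrm{diag}(a,1)$ acts by $a^{2n}\cdot a^{-n}=a^{n}$. The second constraint then forces $a^n v=a v$ for all $a\in\mathbb{Q}^*$. Taking $a=2$, this gives $v=0$ unless $n=1$, that is, unless $V$ is the adjoint representation $\mathrm{Ad}\cong\mathfrak{sl}_2$.

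The main obstacle is exactly this adjoint case $n=1$, where the weight computation alone does not kill $\varphi$. The constraint is that $\varphi$ maps every $f\in J$ into the line $F\cdot e$ spanned by the highest weight vector $e$, equivariantly for the scalar action. I must then use the precise normalization of the $B(F)$-action on $J$ from \S\ref{section curves}. There, $\sqrt{t}$ carries an intrinsic weight coming from the quasi-split torus of $\mathrm{SU}_3$ and its restriction to $\iota(\mathrm{PGL}_2(F))$, and I expect the character to be of the form $a\mapsto a$ while $e$ has weight $a^{2}$ (or vice versa), depending on how $\mathrm{PGL}_2$ is parametrized. I would recheck the weight of $e$ under the identification $B(F)\cong F^*\rtimes F$ used in the paper: if $(a,b)$ corresponds to $\mathrm{diag}(a,1)$ modulo scalars, then $\mathrm{Ad}$ has highest weight $a$, which would match and leave room for nonzero maps. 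In that event I would go back to the proof of Theorem \ref{main teo 0} to pin down the weights honestly, since the statement asserts vanishing. If mismatched weights are confirmed (for instance, $J$ having weight $a^{1/2}$-type behavior from $\sqrt{t}$, namely odd weight while all $\mathrm{PGL}_2$-weights are even), then $\mathrm{Hom}_{B(F)}(J,V)=0$ for all $V$, and the theorem follows.
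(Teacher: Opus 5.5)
There is a genuine gap, and it cannot be closed. Your reduction matches the paper's: Theorem~\ref{main teo 0}, then $\mathrm{Hom}_{B(F)}(J,V)=\mathrm{Hom}_{T(F)}(J,V^{U(F)})$, then a comparison of torus characters. Your weight bookkeeping is the consistent one. With $T(F)=\{\mathrm{diag}(a,1)\}$ and $V=\mathrm{Sym}^{2n}(F^2)\otimes{\det}^{-n}$, the line $V^{U(F)}$ has character $a^n$ and $J$ has character $a$.

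The gap is the case $n=1$. Your last paragraph only hopes for a mismatch of weights, and that mismatch does not exist:
\begin{itemize}
\item Conjugation by $\mathrm{diag}(s,1,s^{-1})$ sends $\mathrm{u}_a(x,y)$ to $\mathrm{u}_a(sx,s^2y)$. So $T(F)$ acts on $J$ and on $U(F)=\{\mathrm{u}_a(b,-b^2/2)\colon b\in F\}$ through the same character, namely the root.
\item $\sqrt{t}$ lies in the coefficient ring, not in the torus, so nothing ``half-integral'' appears.
\item The line $\mathrm{Ad}^{U(F)}=\mathrm{Lie}\,U$ also carries the root as its character.
\end{itemize}
Hence for every $F$-linear $\lambda\colon J\to F$ and every $0\neq e\in\mathrm{Ad}^{U(F)}$, the map $f\mapsto\lambda(f)e$ is a nonzero element of $\mathrm{Hom}_{B(F)}(J,\mathrm{Ad})$. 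In fact $\mathrm{Hom}_{B(F)}(J,\mathrm{Ad})\cong\mathrm{Hom}_F(J,F)$. So your route cannot be completed. By Theorem~\ref{main teo 0}, $H^1(\mathrm{SU}_3(F[t]),\mathrm{Ad})$ acquires this huge summand beyond the part inflated from $\mathrm{PGL}_2(F)$. This is the unitary analogue of Knudson's $F^{\infty}$.

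The paper's proof of Proposition~\ref{prop final} reaches vanishing for every $n$ only by asserting that $T(F)$ acts on $V_{2n}$ with weight $2n$ and on $J$ with weight $1$. These two numbers refer to different parametrizations of the torus. The weight $2n$ is for $\mathrm{diag}(s,s^{-1})$, and under that parametrization $J$ has weight $2$. Made consistent, the paper's argument is exactly your computation and breaks at the same place.

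There is also an upstream problem that affects your use of Theorem~\ref{main teo 0} for $n\neq 1$.
\begin{itemize}
\item For $u,x\in J$ one has $u\bar x-\bar u x\in t\sqrt{t}\,F[t]$, so $\mathrm{u}_{2a}(\sqrt{t})\notin[U_J,U_J]$.
\item The element $\mathrm{u}_a(p(t)/2,\ast)$ used in the proof of Lemma~\ref{lemma Uj ab} is not in $U_J$ when $p(0)\neq 0$.
\item In fact $U_J^{\mathrm{ab}}\cong J\oplus F$, not $J$.
\end{itemize}
Concretely, write $\gamma=\gamma_0+\sqrt{t}\,\gamma_1+\cdots$. The map $\gamma\mapsto\gamma_1\gamma_0^{-1}$ is a $1$-cocycle on $\mathrm{SU}_3(F[t])$, for the action through $\gamma_0$. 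Its values lie in the $5$-dimensional irreducible module of traceless $q$-self-adjoint matrices, which is the case $n=2$. It is nonzero on $\mathrm{u}_{2a}(\sqrt{t})\in\Gamma(t)$, hence not inflated from $\mathrm{PGL}_2(F)$.

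So the intended conclusion, that $H^1$ comes from $\mathrm{PGL}_2(F)$, fails at least for $V=\mathrm{Ad}$ and $V=\mathrm{Sym}^4(F^2)\otimes{\det}^{-2}$. The statement needs to be corrected, not merely reproved.
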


In contrast with Knudson's result for $\mathrm{SL}_n$ (Theorem \ref{teo2}), 
where the adjoint representation gives rise to an additional term, Theorem \ref{main teo 1} shows that in the unitary case no such additional contribution arises.\\

As we state in Lemma \ref{lemma 1}, there is a split exact sequence of the form:
$$1 \to \Gamma(t) \to \mathrm{SU}_3(F[t]) \to \mathrm{PGL}_2(F) \to 1 ,$$
where $\Gamma(t)$ is a principal congruence subgroup of $\Gamma:=\mathrm{SU}_3(F[t])$.
In analogy to the second approach of Knudson introduced above, our method focuses on describing and using the Hochschild-Serre spectral sequence associated the aforementioned exact sequence.
As we prove in Lemma \ref{lemma 2}, for each $\mathrm{PGL}_2(F)$-module $M$, we have:
$$
 H^1\big( \Gamma, M \big) \cong H^1\big( \mathrm{PGL}_2(F), M \big) \oplus \mathrm{Hom}_{\mathrm{PGL}_2(F)}\big( \Gamma(t)^{\mathrm{ab}}, M\big),
$$
so we reduce to explicitly compute the abelianization $\Gamma(t)^{\mathrm{ab}}$ of $\Gamma(t)$ and the corresponding homomorphism module.
Indeed, in \S \ref{section abelianization} we study $\Gamma(t)^{\mathrm{ab}}$ together with $\mathrm{Hom}_{\mathrm{PGL}_2(F)}\big( \Gamma(t)^{\mathrm{ab}}, M\big)$ due to an amalgamated product describing $\Gamma(t)$ (See Eq.\eqref{eq amal prod}). In particular, in \S \ref{section abelianization} we prove Theorem \ref{main teo 0}.
Then, in order to prove Theorem \ref{main teo 1}, in \S \ref{section cohomology} we concentrate on the case where $M$ is an irreducible $\mathrm{PGL}_2$-representation.


\section{Algebraic curves and definition of \texorpdfstring{$\mathrm{SU_{3,\mathbb{P}^1_F}}$}{SU3}}\label{section curves}

Assume that $\mathrm{char}(F)\neq 2$ and let us consider the $2:1$ (ramified) cover $\psi: \mathcal{C}=\mathbb{P}^1_F \to \mathcal{D}=\mathbb{P}^1_F$ given by $z\mapsto z^2$.
This cover corresponds to the quadratic extension field $L=F(\sqrt{t})$ over $K=F(t)$.
Let $R$ be a subring of $K$ such that $\mathrm{Quot}(R)=K$, and let $S\subset L$ be its integral closure in $L$.
At any affine subset $\mathrm{Spec}(R)\subset \mathcal{C}$, we denote by $\mathcal{G}_R$ the special unitary group-scheme defined from the $R$-hermitian form:
\begin{equation}\label{eq h_R}
h_R:S^3\to R, \quad h_R(x,y,z):=x\bar{z}+ y \bar{y} + z \bar{x},
\end{equation}
where $\overline{(\cdot)}$ denotes the non-trivial element in $\mathrm{Gal}(L/K)$.
Since the curve $\mathcal{C}$ can be covered by affine subsets $\mathrm{Spec}(R_i)$ with affine intersection, the groups $\mathcal{G}_{R_i}$ can be glued in order to define the $\mathcal{C}$-group scheme $\mathcal{G}=\mathrm{SU}_{3,\mathcal{C}}$.

In the sequel, we write $\mathcal{G}_E$ to denote an algebraic group defined over a field $E$.
The generic fiber $\mathcal{G}_K$ of $\mathcal{G}$ is the special unitary group $\mathrm{SU}_{3,K}$ consisting in matrices in $\mathrm{SL}_{3,L}$ preserving the hermitian form $h_K$.
Such a group, defined over $K$, is quasi-split semisimple and simply connected.
One maximal $K$-split torus $\mathcal{S}_K$ in $\mathcal{G}_K$ consists in the group of diagonal matrices:
$$\mathcal{S}_K :=\left \lbrace 
\begin{pmatrix}
s & 0 & 0 \\
0 & 1 & 0 \\
0 & 0 & s^{-1}
\end{pmatrix}
\Big\vert s\in \mathbb{G}_{m,K}=\mathrm{GL}_{1,K}\right \rbrace.$$
In particular, the (split) $K$-rank of $\mathcal{G}_K$ is $1$.
The centralizer $\mathcal{T}_K$ of $\mathcal{S}_K$ in $\mathcal{G}_K$ is a $K$-maximal torus of $\mathcal{G}_K$.
This group can be described as:
$$\mathcal{T}_K :=\left \lbrace
\begin{pmatrix}
\lambda & 0 & 0 \\
0 & \bar{\lambda} \lambda^{-1} & 0 \\
0 & 0 & \bar{\lambda}^{-1}
\end{pmatrix} \Big\vert \lambda\in R_{L/K}(\mathbb{G}_{m,L}) \right \rbrace,$$
where $R_{L/K}(\mathbb{G}_{m,L})$ is a Weil restriction of $\mathbb{G}_{m,L}$.
Note that $\mathcal{T}_K$ splits over $L$, but it fails to decompose over $K$.
More explicitly, note that $\mathcal{T}_L:=\mathcal{T}_K \otimes_{K} L \cong \mathbb{G}_{m,L}^2$, however $\mathcal{T}_K \not\cong \mathbb{G}_{m,K}^2$. 
Recall that isotrivial groups decomposes at finite extensions $F'/F$.
In particular, since $L$ does not have the form $F'(t)$, for some finite extension $F'/F$, the group scheme $\mathcal{G}$ is non-isotrivial, i.e., $\mathcal{G}$ does not have an $F$-model.

Now, at any closed point $P$ of $\mathcal{C}$ that fails to decompose at $\mathcal{D}$, the $K_P$-group $\mathcal{G}_{K_P}=\mathcal{G}_K \otimes_{K} K_P$ is quasi-split and it splits at the quadratic extension $L_P=L \otimes_K K_P$.
If $P$ decomposes at $\mathcal{D}$, then $\mathcal{G}_{K_P}=\mathrm{SL}_{3,K_P}$.

In the sequel, we consider the (abstract) group $\Gamma:=\mathrm{SU}_3(F[t])$ of $R=F[t]$-points of $\mathcal{G}$.
This group can be represented as the group of matrices in $\mathrm{SL}_3(F[\sqrt{t}])$ preserving the form $h_{R}$ in Eq. \eqref{eq h_R}.

\section{Congruence subgroups and reduction to \texorpdfstring{$\mathrm{PGL}_2$}{PGL2}}\label{section cong and red}

This section and \S \ref{section abelianization} are devoted to proving Theorem \ref{main teo 0}.
Indeed, let $\mathrm{ev}_0: \mathrm{SL}_3(F[\sqrt{t}]) \to \mathrm{SL}_3(F)$ be the group homomorphism induced by the evaluation of $t$ at $0$.
We denote by $\Gamma(t)$ the principal congruence subgroup of $\Gamma=\mathrm{SU}_3(F[t])$ defined by $\Gamma(t):= \ker(\mathrm{ev}_0) \cap \Gamma$.

\begin{lem}\label{lemma 1}
There exists a split exact sequence of the form:
\begin{equation}\label{eq cong sub}
1 \to \Gamma(t) \to \Gamma \xrightarrow{\pi} \mathrm{PGL}_2(F) \to 1. 
\end{equation}
\end{lem}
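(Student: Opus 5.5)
We need to identify the image of $\mathrm{ev}_0$ restricted to $\Gamma$. Write $u=\sqrt{t}$, so that $S=F[u]$ and the Galois involution is $u\mapsto -u$. A matrix $g\in \mathrm{SL}_3(F[u])$ lies in $\Gamma$ exactly when $\bar g^{T} H g = H$, where $H$ is the antidiagonal matrix of $h_R$ in Eq.~\eqref{eq h_R}. Setting $u=0$ gives $g_0:=\mathrm{ev}_0(g)\in \mathrm{SL}_3(F)$ with $g_0^T H g_0=H$. Since $H$ is a symmetric nondegenerate matrix, $g_0$ therefore lies in the special orthogonal group $\mathrm{SO}(H)(F)$ of the split ternary quadratic form $2xz+y^2$. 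The plan has three steps. First, identify $\mathrm{SO}(H)(F)$ with $\mathrm{PGL}_2(F)$. Second, show that $\mathrm{ev}_0(\Gamma)$ is all of $\mathrm{SO}(H)(F)$ by building a section. Third, set $\pi$ to be this composite; its kernel is $\Gamma\cap\ker(\mathrm{ev}_0)=\Gamma(t)$ by definition.

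\textbf{Step 1.} We use the adjoint representation of $\mathrm{PGL}_2$ on trace-zero $2\times 2$ matrices (equivalently, on binary quadratic forms), which preserves the determinant form. In the basis $e_{12},\; h,\; -e_{21}$, with $h$ suitably scaled, the determinant becomes proportional to $2xz+y^2$. This gives an explicit homomorphism $\rho:\mathrm{PGL}_2(F)\to \mathrm{SO}(H)(F)$. It is the classical isomorphism $\mathrm{PGL}_2\cong \mathrm{SO}_3$, valid for $\mathrm{char}(F)\neq 2$. For injectivity, only scalars act trivially on $\mathfrak{sl}_2$. For surjectivity on $F$-points, we use that $\mathrm{SO}_3$ is generated by its $F$-points of root subgroups and the torus. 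Alternatively, it suffices to note that $\rho$ is an isomorphism of algebraic groups, so we can cite it.

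\textbf{Step 2 (the section).} We want $\iota:\mathrm{PGL}_2(F)\to\Gamma$ with $\mathrm{ev}_0\circ\iota=\rho$. The first thing to try is the obvious candidate, $\iota=\rho$ viewed as constant matrices. Any $g\in\mathrm{SO}(H)(F)$ has entries in $F$, so $\bar g=g$, and hence $\bar g^T H g=g^THg=H$. Its determinant is $1$. So the inclusion $\mathrm{SO}(H)(F)\subset \Gamma$ is immediate, and $\iota:=\rho$ composed with this inclusion is an injective homomorphism splitting $\mathrm{ev}_0$. This also agrees with the map $\iota$ of Theorem~\ref{teo1}. Consequently $\mathrm{ev}_0(\Gamma)=\mathrm{SO}(H)(F)$.

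\textbf{Step 3.} Define $\pi:=\rho^{-1}\circ \mathrm{ev}_0|_\Gamma$. It is surjective with section $\iota$, and $\ker\pi=\Gamma(t)$, which gives the split exact sequence \eqref{eq cong sub}.

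The main obstacle is Step 1: making the identification $\mathrm{SO}(H)(F)\cong\mathrm{PGL}_2(F)$ rigorous on $F$-points for arbitrary $F$ (not necessarily infinite or algebraically closed). This includes checking that $\mathrm{ev}_0(\Gamma)$ lands in $\mathrm{SO}$ rather than merely $\mathrm{O}$; that part is immediate from $\det g_0=1$. For the identification itself, I would argue with the isomorphism of smooth group schemes $\mathrm{PGL}_2\to\mathrm{SO}(q)$ for a split ternary form. Its surjectivity on $F$-points follows because $H^1(F,\mathbb{G}_m)=0$ by Hilbert 90, so $\mathrm{PGL}_2(F)=\mathrm{GL}_2(F)/F^*$. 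A hands-on alternative is to check generation of $\mathrm{SO}(H)(F)$ by elements of the form $\rho(\text{upper/lower unipotent})$ and $\rho(\text{diagonal})$ via a Bruhat decomposition.
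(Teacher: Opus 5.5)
Your proof is correct and follows the paper's route: $\mathrm{ev}_0(\Gamma)$ lands in $\mathrm{SO}(q)(F)=\Gamma\cap\mathrm{SL}_3(F)$, the constant matrices give the splitting, and $\mathrm{SO}_3(F)\cong\mathrm{PGL}_2(F)$ is the classical isomorphism (the paper cites Dieudonn\'e). The only difference is that you verify $\Gamma/\Gamma(t)\cong\mathrm{SO}_3(F)$ directly, using that the involution becomes trivial at $\sqrt{t}=0$, where the paper cites Lemma 6.1 of \cite{bravohominv}.

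One cosmetic slip: in the basis $e_{12},h,-e_{21}$ you get $\det = xz-s^2y^2$, which is proportional to $2xz+y^2$ only if $-2$ is a square in $F$. Rescale an off-diagonal vector instead (the basis $2e_{12},h,e_{21}$ gives $-\det = 2xz+y^2$ exactly), or simply note that $\mathrm{SO}$ depends only on the similarity class of the form and all isotropic ternary forms are similar.
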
 

\begin{proof}
Let $q: F^3\to F$, where $q(x,y,z)=2xz+y^2$, be the quadratic form on $F$ defined by the restriction of $h$ to $F$, and let $\mathrm{SO}_3=\mathrm{SO}(q)$ be the special orthogonal group defined from $q$.
It follows from \cite[Lemma 6.1]{bravohominv} that $\Gamma/\Gamma(t)\cong \mathrm{SO}_3(F)$.
Moreover, it follows from \cite[Ch. II, \S 9, (3)]{Dieudonnée} that $\mathrm{SO}_3(F) \cong \mathrm{PGL}_2(F)$.
Since $\mathrm{SO}_3(F)=\Gamma \cap \mathrm{SL}_3(F)$, the result follows.
\end{proof}

Note that, given a $\mathrm{PGL}_2(F)$-module $M$, we can endow $M$ with a $\Gamma$-module structure via $g\cdot m=\pi(g) \cdot m$, where $\pi$ is defined in Eq. \ref{eq cong sub} and $(m,g)\in M \times \Gamma$.
This action is evidently trivial for $\Gamma(t) \subseteq \Gamma$.
Conversely, any $\Gamma$-module $M$ with a trivial action of $\Gamma(t)$ is a $\mathrm{PGL}_2(F)$-module.
We say that a $G$-module $M$ is a $G$-representation when it is a finite dimensional vector space with a $G$-action by linear maps.
The next result follows from the previous discussion.

\begin{lem}\label{lemma equiv rep}
There is a bijection between the set of $\Gamma$-modules (resp. $\Gamma$-representations) with a trivial action of $\Gamma(t)$ and the set of $\mathrm{PGL}_2(F)$-modules (resp. $\mathrm{PGL}_2(F)$-representations). \qed
\end{lem}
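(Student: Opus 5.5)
The plan is to construct the two maps of the bijection explicitly from the surjection $\pi:\Gamma\to\mathrm{PGL}_2(F)$ of Lemma \ref{lemma 1}, and then check that they are mutually inverse.

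\emph{Forward direction (inflation).} Given a $\mathrm{PGL}_2(F)$-module $M$, endow it with the $\Gamma$-action $g\cdot m:=\pi(g)\cdot m$, as in the discussion preceding the statement. This is an action because $\pi$ is a homomorphism. Since $\Gamma(t)=\ker(\pi)$ by exactness of \eqref{eq cong sub}, the subgroup $\Gamma(t)$ acts trivially. The underlying abelian group is unchanged and each $g$ acts by the same endomorphism as $\pi(g)$. Hence, if $M$ is a finite-dimensional vector space on which $\mathrm{PGL}_2(F)$ acts linearly, the resulting $\Gamma$-module is again a representation.

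\emph{Backward direction (descent).} Let $N$ be a $\Gamma$-module on which $\Gamma(t)$ acts trivially. For $x\in\mathrm{PGL}_2(F)$, choose any $g\in\Gamma$ with $\pi(g)=x$; such a $g$ exists because $\pi$ is surjective. Define $x\cdot n:=g\cdot n$.
\begin{itemize}
\item This is well defined: if $\pi(g)=\pi(g')$, then $g^{-1}g'\in\Gamma(t)$ acts trivially, so $g'\cdot n=g\cdot(g^{-1}g'\cdot n)=g\cdot n$.
\item It is an action: if $g$ lifts $x$ and $h$ lifts $y$, then $gh$ lifts $xy$.
\item Linearity and finite-dimensionality are inherited directly from $N$.
\end{itemize}
One could instead use the splitting $\mathrm{SO}_3(F)\subseteq\Gamma$ to define the action by restriction. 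The argument via well-definedness is cleaner, however, because it shows the descended action does not depend on any choice.

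\emph{Mutually inverse.} Inflating and then descending recovers the original action on $M$, since $\pi(g)$ itself is a lift of $\pi(g)$. Descending and then inflating recovers the original action on $N$, since the action of $g$ equals the action of the descended element $\pi(g)$. Both constructions leave the underlying abelian group or vector space untouched, so they are inverse bijections. They are moreover compatible with morphisms, giving an isomorphism of categories.

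I do not expect any real obstacle here. The only point needing care is the well-definedness in the descent step, and it rests entirely on the identification $\ker(\pi)=\Gamma(t)$ supplied by Lemma \ref{lemma 1}.
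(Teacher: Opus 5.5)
Your proposal is correct and follows the paper's approach. The paper's justification is just the discussion preceding the lemma: inflate a $\mathrm{PGL}_2(F)$-module along $\pi$, and conversely descend a $\Gamma$-module on which $\Gamma(t)=\ker(\pi)$ acts trivially; you make explicit the well-definedness and mutual-inverse checks that the paper leaves implicit.
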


In the sequel, we focus on the description of the cohomology group $H^1(\Gamma,M)$.
Before that, we briefly recall some basic fact on group cohomology.
Indeed, let $G$ be an abstract group and let $M$ be an (abelian) $G$-module.
A $1$-cocycle is a map $f:G \to M$ satisfying $f(gh) = f(g) + g \cdot f(h),$ for all $g,h \in G$.
A $1$-cocycle is called a $1$-coboundary if there exists $m \in M$ such that $f(g)= g \cdot m - m$, for all $g \in G$. 
The first cohomology group $H^1(G,M)$ of $G$ with coefficients in $M$ is the quotient of the group of $1$-cocylces by the group of $1$-coboundaries, defined above.
Equivalently, $H^1(G,M)$ classifies crossed homomorphisms from $G$ to $M$ up to equivalence.
Next result is an standard fact on the cohomology of group extensions, which we prove here for the sake of completeness.

\begin{lem}\label{lemma coh is h}
Let $1\to H \to G \to Q \to 1$ be a short exact sequence and let $M$ be a $G$-module where $H$ acts trivially.
Then:
$$H^0\big( Q, H^1(H,M)\big) \cong \mathrm{Hom}_Q\big(H^{\mathrm{ab}}, M \big) .$$
\end{lem}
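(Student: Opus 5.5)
Since $H$ acts trivially on $M$, the first step is to identify $H^1(H,M)$ with $\mathrm{Hom}(H,M)=\mathrm{Hom}(H^{\mathrm{ab}},M)$. With trivial action the cocycle condition reads $f(hh')=f(h)+f(h')$, so $1$-cocycles are exactly group homomorphisms $H\to M$. The coboundaries $h\cdot m-m$ all vanish. Because $M$ is abelian, every homomorphism $H\to M$ factors uniquely through $H^{\mathrm{ab}}$.

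Next I describe the $Q$-action on $H^1(H,M)$ and transport it through this identification. The standard action of $g\in G$ on a cocycle $f$ is $(g\cdot f)(h)=g\cdot f(g^{-1}hg)$. On cohomology this action is trivial for $g\in H$: for $g\in H$, the trivial action on $M$ gives $(g\cdot f)(h)=f(g^{-1}hg)=f(h)$, since $f$ is a homomorphism into an abelian group. So $G/H\cong Q$ acts. On $\mathrm{Hom}(H^{\mathrm{ab}},M)$ this is the usual diagonal action: $Q$ acts on $H^{\mathrm{ab}}$ via conjugation $q\cdot \bar h=\overline{\tilde q h\tilde q^{-1}}$, which is well defined because inner automorphisms by elements of $H$ act trivially on $H^{\mathrm{ab}}$. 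It also acts on $M$, since $H$ acts trivially.

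Finally I take invariants. A homomorphism $\varphi$ is fixed by $Q$ iff $g\cdot\varphi(g^{-1}hg)=\varphi(h)$ for all $g\in G$ and $h\in H$. Equivalently, $\varphi(ghg^{-1})=g\cdot\varphi(h)$, which says $\varphi$ is $Q$-equivariant. Hence
\[
H^0\big(Q,H^1(H,M)\big)=\mathrm{Hom}(H^{\mathrm{ab}},M)^Q=\mathrm{Hom}_Q\big(H^{\mathrm{ab}},M\big).
\]

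The only point requiring care is the second step: checking that the conjugation action is well defined on the quotient $Q$, and that the convention for $g\cdot f$ matches the one in the Hochschild--Serre spectral sequence. The cocycle computation itself is routine.
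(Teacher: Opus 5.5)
Your proof is correct and follows the same route as the paper. You use the trivial $H$-action to identify $H^1(H,M)$ with $\mathrm{Hom}(H^{\mathrm{ab}},M)$, let $Q$ act on $H^{\mathrm{ab}}$ by conjugation, and take $Q$-invariants to obtain $\mathrm{Hom}_Q(H^{\mathrm{ab}},M)$, while spelling out details the paper leaves implicit: that elements of $H$ act trivially on cocycles, so the $G$-action descends to $Q$, and that $Q$-invariance of $\varphi$ is exactly $Q$-equivariance.
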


\begin{proof}
Since $H$ acts trivially on $M$, we have $
H^1(H,M) \;\cong\; \mathrm{Hom}(H,M).$ Moreover, since $M$ is abelian, we have $H^1(H,M) \;\cong\; \mathrm{Hom}(H^{\mathrm{ab}},M).$
The $Q$-action on $H$ by conjugation descends to an action on $H^{\mathrm{ab}}$, and hence $\mathrm{Hom}(H^{\mathrm{ab}},M)$ becomes a $Q$-module in the usual way. Taking $Q$-invariants we obtain $H^0\big( Q,H^1(H,M) \big) \;\cong\; \mathrm{Hom}_Q \big( H^{\mathrm{ab}},M \big),$
which completes the proof.
\end{proof}

Now, we return to the description of the first cohomology group of $\Gamma$.

\begin{lem}\label{lemma 2}
For each $\mathrm{PGL}_2(F)$-module $M$, we have:
\begin{equation}\label{eq H1}
 H^1\big( \Gamma, M \big) \cong H^1\big( \mathrm{PGL}_2(F), M \big) \oplus \mathrm{Hom}_{\mathrm{PGL}_2(F)}\big( \Gamma(t)^{\mathrm{ab}}, M\big).
\end{equation}
\end{lem}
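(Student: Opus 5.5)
I would use the Lyndon--Hochschild--Serre spectral sequence of the split extension \eqref{eq cong sub} of Lemma \ref{lemma 1}, $E_2^{p,q}=H^p\big(\mathrm{PGL}_2(F),H^q(\Gamma(t),M)\big)\Rightarrow H^{p+q}(\Gamma,M)$. Its associated five-term exact sequence is
$$0\to H^1\big(\mathrm{PGL}_2(F),M^{\Gamma(t)}\big)\xrightarrow{\inf} H^1(\Gamma,M)\xrightarrow{\mathrm{res}} H^1(\Gamma(t),M)^{\mathrm{PGL}_2(F)}\xrightarrow{d_2} H^2\big(\mathrm{PGL}_2(F),M^{\Gamma(t)}\big)\xrightarrow{\inf} H^2(\Gamma,M).$$
Since $\Gamma(t)$ acts trivially on $M$, we have $M^{\Gamma(t)}=M$. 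Lemma \ref{lemma coh is h} identifies $H^1(\Gamma(t),M)^{\mathrm{PGL}_2(F)}=H^0\big(\mathrm{PGL}_2(F),H^1(\Gamma(t),M)\big)$ with $\mathrm{Hom}_{\mathrm{PGL}_2(F)}\big(\Gamma(t)^{\mathrm{ab}},M\big)$. It then remains to show two things: the transgression $d_2$ vanishes, and the resulting short exact sequence splits.

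Both facts come from the splitting. Let $s:\mathrm{PGL}_2(F)\to\Gamma$ be the section from Lemma \ref{lemma 1}, given by the inclusion $\mathrm{SO}_3(F)=\Gamma\cap\mathrm{SL}_3(F)\subseteq\Gamma$. Then $\pi\circ s=\mathrm{id}$, so $s^*\circ\inf=\mathrm{id}$ on $H^n(\mathrm{PGL}_2(F),M)$ for every $n$, because the $\Gamma$-action on $M$ pulls back along $s$ to the original action. Hence both inflation maps are injective. Injectivity of $\inf$ in degree $2$ forces $d_2=0$, so $\mathrm{res}$ is surjective. Moreover $s^*$ retracts the inflation in degree $1$, so the short exact sequence $0\to H^1(\mathrm{PGL}_2(F),M)\to H^1(\Gamma,M)\to \mathrm{Hom}_{\mathrm{PGL}_2(F)}(\Gamma(t)^{\mathrm{ab}},M)\to 0$ splits. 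This gives \eqref{eq H1}.

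As a cross-check, and in case one wants to avoid naturality facts about the spectral sequence, I can argue directly with cocycles, using $\Gamma=\Gamma(t)\rtimes s(\mathrm{PGL}_2(F))$. Take $\phi\in\mathrm{Hom}_{\mathrm{PGL}_2(F)}(\Gamma(t)^{\mathrm{ab}},M)$ and define $f(h\,s(q)):=\phi(h)$. The cocycle identity reduces to $\phi\big(s(q)hs(q)^{-1}\big)=q\cdot\phi(h)$, which is exactly the equivariance of $\phi$. So $\phi\mapsto f$ is a section of $\mathrm{res}$, which gives surjectivity and the splitting explicitly.

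The main obstacle is the vanishing of $d_2$, that is, surjectivity of restriction. The retraction argument handles it. The explicit cocycle construction confirms it, provided I check carefully that the conjugation action of $\mathrm{PGL}_2(F)$ on $\Gamma(t)^{\mathrm{ab}}$ is the one induced through $s$.
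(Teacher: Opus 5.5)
Your proof is correct and follows essentially the paper's route: the Lyndon--Hochschild--Serre spectral sequence of the split extension \eqref{eq cong sub}, vanishing of $d_2^{0,1}$ because of the splitting, and Lemma \ref{lemma coh is h} to identify $E_2^{0,1}$ with $\mathrm{Hom}_{\mathrm{PGL}_2(F)}\big(\Gamma(t)^{\mathrm{ab}},M\big)$. You also supply two justifications the paper leaves implicit: $s^*\circ\mathrm{inf}=\mathrm{id}$ makes inflation injective in degree $2$, which forces the transgression to vanish, and $s^*$ splits the resulting short exact sequence, which is what gives the direct sum $E_2^{1,0}\oplus E_2^{0,1}$.
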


\begin{proof}
Since $\Gamma(t)$ acts trivially on $M$ and the exact sequence in Eq. \eqref{eq cong sub} splits, we have that the map $d_2^{0,1}: E_2^{0,1} \to E_2^{2,0}$ vanishes.
Thus, we have:
$$ H^1\big( \Gamma, M \big) \cong E_2^{1,0} \oplus E_2^{0,1} \cong H^1\big( \mathrm{PGL}_2(F), M \big) \oplus H^0\big( \mathrm{PGL}_2(F), H^1\big(\Gamma(t),M\big) \big).$$
Again, since $\Gamma(t)$ acts trivially on $M$, it follows from Lemma \ref{lemma coh is h} that the cohomology group $H^0\big( \mathrm{PGL}_2(F), H^1\big(\Gamma(t),M\big) \big)$ is isomorphic to the group $ \mathrm{Hom}_{\mathrm{PGL}_2(F)}\big(\Gamma(t)^{\mathrm{ab}}, M\big)$, whence the result follows.
\end{proof}

\section{On the abelianization of congruence subgroups}\label{section abelianization}

Let $\mathrm{N}, \mathrm{Tr}: L \to K$ be the norm and the trace maps defined by the quadratic extension $L/K$.
Let $H(L,K)$ and $H(L,K)^0$ be the sets:
\begin{align*}
H(L,K) &:= \left\{ (u,v) \in L^2 \big\vert \, \mathrm{N}(u) + \mathrm{Tr}(v) = 0\right\},\\
H(L,K)^0 &:= \left\{ v \in L \big\vert \, \mathrm{Tr}(v) = 0\right\}.
\end{align*}
These sets parameterize the root subgroups $\mathcal{U}_a(K)$ and $\mathcal{U}_{2a}(K)$ of $\mathcal{G}(K)$ through the maps $\U : H(L,K) \to \mathcal{U}_a(K) \subset\mathcal{G}(K)$ and $\V : H(L,K)^0 \to \mathcal{U}_{2a}(K) \subset \mathcal{G}(K)$, respectively defined by
\begin{equation*}
\U (u,v)= \begin{pmatrix} 1 & -\bar{u} & v \\ 0 & 1 & u \\ 0 & 0 & 1\end{pmatrix}, \quad \V(v) = \begin{pmatrix} 1 & 0 & v \\ 0 & 1 & 0 \\ 0 & 0 & 1\end{pmatrix},
\end{equation*}
where $(u,v) \in H(L,K)$ and $v \in H(L,K)^0$.
Let $J$ be the ideal $J=\sqrt{t}F[\sqrt{t}]$ and let us write $H(L,K)_J:=H(L,K)\cap (J \cap J)$.
We denote by $U_J$ the unipotent group
$$U_J=\lbrace \mathrm{u}_a(x,y) \vert (x,y) \in H(L,K)_J \rbrace.$$
It follows from \cite[Cor. 6.6]{bravohominv} that congruence subgroup $\Gamma(t)$ decomposes as a free product of the form:
\begin{equation}\label{eq amal prod}
\Gamma(t)={\ast}_{\mathbb{P}^1(F)} U_J.
\end{equation}
Recall that $\mathbb{P}^1(F)= \mathrm{PGL}_2(F)/ B(F)$, where $B$ is the group of upper triangular matrices in $\mathrm{PGL}_2$.
Then, it follows from Eq. \eqref{eq amal prod} that:
\begin{equation}\label{eq abs}
\Gamma(t)^{\mathrm{ab}} = \mathrm{Ind}_{B(F)}^{\mathrm{PGL}_2(F)} U_J^\mathrm{ab} = \bigoplus_{\mathbb{P}^1(F)} U_J^\mathrm{ab}.
\end{equation}
In order to study $\Gamma(t)^{\mathrm{ab}}$ we describe $U_J^{\mathrm{ab}}$ in the following lemma.

\begin{lem}\label{lemma Uj ab}
We have $U_J^{\mathrm{ab}}\cong J =\sqrt{t}F[\sqrt{t}]$.
\end{lem}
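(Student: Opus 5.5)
We will compute the group law on $U_J$ explicitly and identify its commutator subgroup. Multiplying the matrices in the definition of $\U$ gives the standard formula
$$\U(u,v)\,\U(u',v') = \U\big(u+u',\, v+v' - \bar{u}u'\big),$$
and the commutator of two such elements is
$$[\U(u,v),\U(u',v')] = \V\big(\bar{u}'u - \bar{u}u'\big).$$
Hence the commutator subgroup $[U_J,U_J]$ lies inside $U_J \cap \mathcal{U}_{2a}(K)$, i.e. inside $\{\V(w) : w \in J,\ \mathrm{Tr}(w)=0\}$. We will then show that the projection $\U(u,v)\mapsto u$ is a homomorphism $U_J \to J$ with kernel exactly $\{\V(w): w\in J,\ \mathrm{Tr}(w)=0\}$. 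Surjectivity onto $J$ will come from the fact that for every $u \in J$ we can take $v = -\mathrm{N}(u)/2 \in J$ (using $\mathrm{char}(F)\neq 2$), and this $v$ satisfies $\mathrm{N}(u)+\mathrm{Tr}(v)=0$.

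The core step is to prove that the kernel $Z_J := \{\V(w) : w \in J\cap H(L,K)^0\}$ is contained in $[U_J,U_J]$. Once this holds, $U_J^{\mathrm{ab}} = U_J/Z_J \cong J$ as an additive group.

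Write $s=\sqrt{t}$, so that $\bar{s}=-s$. Trace-zero elements of $F[s]$ are the odd polynomials $s\,g(t)$, and those lying in $J$ are exactly all odd polynomials $\sum_{k\ge 0} c_k s^{2k+1}$. The commutator formula produces $w=\bar{u}'u-\bar{u}u' = -2\,\mathrm{odd}(\ldots)$. Taking $u = c\in F$ is not allowed, because $u$ must lie in $J$. So we take $u=s^{m}$ and $u'=s^{n}$ with $m,n\ge1$. Then
$$\bar{u}'u-\bar{u}u' = \big((-1)^n-(-1)^m\big)s^{m+n}.$$
For $m=1$ and $n=2k$ this gives $-2s^{2k+1}$, but it only produces the odd powers $s^{3}, s^{5},\dots$, and never $s$ itself, since $m+n\ge 3$. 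Scaling $u$ by $c\in F$ gives every $F$-multiple, so $[U_J,U_J]$ contains $\V(w)$ for all odd $w$ of degree at least $3$.

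The hard point is therefore the term $s$. The element $\V(c s)$ for $c\in F^*$ does not appear to be a commutator. If it really is not, then the abelianization is $J\oplus F$, an extension of $J$ by $F\cdot \V(s)$, rather than $J$.

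I would resolve this as follows. Set $u=as+bs^2+\dots$ and $u'=a's+b's^2+\dots$. The coefficient of $s$ in $\bar{u}'u-\bar{u}u'$ is zero, because every product term has degree at least $2$. So the lowest odd degree obtained is $3$, and the image of $\V(cs)$ in $U_J^{\mathrm{ab}}$ survives. To match the lemma as stated, I would then re-examine the definition of $H(L,K)_J$, which is written as "$H(L,K)\cap(J\cap J)$" and presumably means $(u,v)\in J\times J'$ for a suitable ideal. It is possible that the condition on $v$ is stronger, for example that $v$ has $t$-adic valuation at least $1$, i.e. $v\in tF[s]$. That would exclude $\V(cs)$ and make the map $\U(u,v)\mapsto u$ an isomorphism onto $J$. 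I expect checking this normalization against \cite[Cor. 6.6]{bravohominv} to be the main obstacle. After that, the remaining step is the explicit commutator computation above, showing that all of $Z_J$ lies in the commutator subgroup.
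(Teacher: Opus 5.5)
Your computation is correct, and it shows that the step you call the core step cannot be carried out. So the proposal does not prove the lemma in the form the paper needs.

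\textbf{Where the inclusion fails.} Write $u=\sqrt t\,a(\sqrt t)$ and $u'=\sqrt t\,b(\sqrt t)$ with $u,u'\in J$. Then $\bar u'u-\bar u u'=-t\bigl(a(\sqrt t)b(-\sqrt t)-a(-\sqrt t)b(\sqrt t)\bigr)\in t\sqrt t\,F[t]$. Your choices $u=c\sqrt t$, $u'=t^k$ with $k\ge 1$ realize every such element. Hence $[U_J,U_J]=\{\V(w)\mid w\in t\sqrt t\,F[t]\}$, whereas $\ker f=\{\V(w)\mid w\in \sqrt t\,F[t]\}$.

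\textbf{The paper's proof has the same hole.} It writes $\V(\sqrt t\,p(t))$ as the commutator of $\U\bigl(p(t)/2,-\mathrm{N}(p(t))/2\bigr)$ with $\U(-\sqrt t,-t/2)$. The first factor has entries $p(t)/2$ and $-p(t)^2/2$, which lie in $J$ only when $p(0)=0$. That element lies in $\Gamma$, not in $U_J$. So the argument only covers $z\in t\sqrt t\,F[t]$, which is exactly what you obtained. Consequently $\ker f/[U_J,U_J]\cong F$, spanned by the class of $\V(\sqrt t)$, and $f$ does not induce an isomorphism $U_J^{\mathrm{ab}}\to J$.

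\textbf{Your proposed repair is not available.} $\V(c\sqrt t)$ lies in $\ker(\mathrm{ev}_0)\cap\Gamma=\Gamma(t)$ and commutes with every element of $U_J$. In a free product, an element commuting with a nontrivial element of a factor belongs to that factor. So any $U_J$ for which \eqref{eq amal prod} holds must contain $\V(c\sqrt t)$, and imposing $v\in tF[\sqrt t]$ would break that decomposition.

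\textbf{What actually fails.} ``$J\oplus F$ rather than $J$'' is not by itself a contradiction. As bare abelian groups $U_J^{\mathrm{ab}}\cong J$ anyway: it is an extension of $J$ by $F$, hence a vector space over the prime field of the same infinite dimension as $J$ (in characteristic $p$, use that $U_J$ has exponent $p$). So the lemma read literally holds for trivial reasons. What fails is the version used later: the isomorphism induced by $f$, equivariant for $B(F)$ acting on $J$ by $(a,b)\cdot x=ax$. Two facts rule it out.
\begin{itemize}
\item Conjugation by $\mathrm{diag}(\lambda,1,\lambda^{-1})$ acts on the class of $\V(\sqrt t)$ with weight $2$, whereas $J$ has weight $1$.
\item Conjugation by $\U(c,-c^2/2)\in U(F)$ sends the class of $\U(x,y)$ to that class plus the class of $\V(-2c\,x_1\sqrt t)$, where $x_1$ is the coefficient of $\sqrt t$ in $x$. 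This added class is nonzero when $c\,x_1\neq 0$ since $\mathrm{char}(F)\neq 2$.
\end{itemize}
So $U(F)$ acts nontrivially on $U_J^{\mathrm{ab}}$ but trivially on $J$.
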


\begin{proof}
It follows from the matricial representation of $U_J$ that
it is a group with multiplication rule:
\begin{equation}\label{eq u}
\U(x,y)\U(x',y')=\U(x+x',\,y+y'+x\bar{x}').
\end{equation}
From this expression, it is easy to see that the commutator $[\U(u,v), \U(x,y)]$ equals $\V(0,u\bar{x}-\bar{u}x)$.
Note that the expression $u\bar{x}'-x'\bar{u}$ always lies in $J$. Hence $[U_J,U_J]\subseteq U_J^0$, where $U_J^{0}:=\lbrace \mathrm{u}_{2a}(y) \vert y \in H(L,K)^{0} \cap J \rbrace$.
On the other hand, let $\V(z)\in U_J^{0}$.
By definition of $J$, we can write $z=\sqrt{t}p(t)$, where $p(t) \in F[t]$.
Therefore: 
$$\V(z)=\big[\U\big(p(t)/2,-\mathrm{N}(p(t))/2\big), \U(-\sqrt{t},-t/2)\big],$$
which implies that $U_J^{0} \subseteq [U_J, U_J]$.
Now, let $f: U_J \to J$ be the map defined by $f\big(\U(x,y)\big)=x$. 
It follows from \eqref{eq u} that $f$ is a group homomorphism.
Moreover, since $\U(x,-N(x)/2)=x$, the map $f$ is surjective.
Since $\mathrm{ker}(f)=U_J^{0}$, we conclude that $U_{J}^{\mathrm{ab}}=U_J/U_J^{0} \cong J$.
\end{proof}

\begin{cor}
One has $\Gamma(t)^{\mathrm{ab}} \cong \bigoplus_{\mathbb{P}^1(F)} J$.
\end{cor}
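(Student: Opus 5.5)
The plan is to combine Eq.~\eqref{eq abs} with Lemma~\ref{lemma Uj ab}. The key point is that the isomorphism $U_J^{\mathrm{ab}}\cong J$ must be compatible with the actions that enter the induced module, and not merely an isomorphism of abstract groups.

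First, recall from Eq.~\eqref{eq amal prod} that $\Gamma(t)$ is the free product of the conjugates $gU_Jg^{-1}$, with $g$ running over representatives of $\mathrm{PGL}_2(F)/B(F)=\mathbb{P}^1(F)$. Here $\mathrm{PGL}_2(F)$ is viewed inside $\Gamma$ through the splitting of Lemma~\ref{lemma 1}. The abelianization of a free product is the direct sum of the abelianizations of its factors. Hence
$$\Gamma(t)^{\mathrm{ab}}\cong\bigoplus_{g\in\mathbb{P}^1(F)}\big(gU_Jg^{-1}\big)^{\mathrm{ab}},$$
and conjugation by $g$ identifies each summand with $U_J^{\mathrm{ab}}$. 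This recovers Eq.~\eqref{eq abs}. Substituting Lemma~\ref{lemma Uj ab} summand by summand then gives $\Gamma(t)^{\mathrm{ab}}\cong\bigoplus_{\mathbb{P}^1(F)}J$ as abelian groups, which is the literal content of the corollary.

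To make this useful for Theorem~\ref{main teo 0}, I would also check equivariance. The stabilizer $B(F)$ of the base point normalizes $U_J$, so it acts on $U_J^{\mathrm{ab}}$. Write $B(F)\subset\mathrm{SO}_3(F)$ as the upper triangular matrices, generated by the torus $\mathrm{diag}(s,1,s^{-1})$ and the unipotents $\U(b,-b^2/2)$ with $b\in F$. A direct matrix computation gives
$$\mathrm{diag}(s,1,s^{-1})\,\U(x,y)\,\mathrm{diag}(s^{-1},1,s)=\U(s^{-1}x,\,y).$$
Modulo $U_J^0$, conjugation by $\U(b,-b^2/2)$ changes only the second coordinate, by terms of the form $b\bar x-\bar b x$. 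So the unipotent radical acts trivially on $U_J^{\mathrm{ab}}$, and under the isomorphism $f$ of Lemma~\ref{lemma Uj ab} the torus acts on $J$ by scalar multiplication.

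The main obstacle is fixing the identification of the torus element with $a\in F^*$ under $\mathrm{SO}_3(F)\cong\mathrm{PGL}_2(F)$. Depending on this identification, one gets multiplication by $a$ or by $a^{-1}$ (or a square of it). I would fix the identification so that it matches the action $(a,b)\cdot f=af$ defined in the introduction. Once that is done, $\Gamma(t)^{\mathrm{ab}}\cong\mathrm{Ind}_{B(F)}^{\mathrm{PGL}_2(F)}J$ as $\mathrm{PGL}_2(F)$-modules. Frobenius reciprocity then gives $\mathrm{Hom}_{\mathrm{PGL}_2(F)}(\Gamma(t)^{\mathrm{ab}},M)\cong\mathrm{Hom}_{B(F)}(J,M)$, and combining this with Lemma~\ref{lemma 2} yields Theorem~\ref{main teo 0}.
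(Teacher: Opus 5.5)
Your argument matches the paper's: the corollary is obtained by abelianizing the free product \eqref{eq amal prod} factor by factor, as in Eq.~\eqref{eq abs}, and substituting Lemma~\ref{lemma Uj ab}. One slip in your optional equivariance check: the correct formula is $\mathrm{diag}(s,1,s^{-1})\,\U(x,y)\,\mathrm{diag}(s^{-1},1,s)=\U(sx,s^2y)$, not $\U(s^{-1}x,y)$; the pair $(s^{-1}x,y)$ is not even in $H(L,K)$ unless $s^2=1$ or $x=0$. Your conclusion that the torus acts on $U_J^{\mathrm{ab}}\cong J$ by scalar multiplication is unaffected.
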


Now, we are able to prove Theorem \ref{main teo 0}, which describes the first cohomology group of $\Gamma=\mathrm{SU}_3(F[t])$ with coefficient in an arbitrary $\mathrm{PGL}_2(F)$-module $M$.


\begin{proof}[Proof of Theorem \ref{main teo 0}]
It follows from Shapiro's lemma that
\begin{equation}\label{eq shapiro}
H^0\big( B(F) , H^1(J,M)\big) \cong H^0\Big( \mathrm{PGL}_2(F) , \mathrm{Coind}_{B(F)}^{\mathrm{PGL}_2(F)}   H^1(J,M)\Big).
\end{equation}
Since $J \cong U_J \subset \Gamma(t)$ acts trivially on $M$, Eq. \eqref{eq shapiro} together with Lemma \ref{lemma coh is h} implies that:
$$
\mathrm{Hom}_{B(F)}\big( J ,M \big) \cong \Big( \prod_{\mathbb{P}^1(F)} H^1(J,M) \Big)^{\mathrm{PGL}_2(F)} \cong \mathrm{Hom}_{\mathrm{PGL}_2(F)}\Big( \bigoplus_{\mathbb{P}^1(F)} J,M \Big).
$$
Since $\Gamma(t)^{\mathrm{ab}}\cong \bigoplus_{\mathbb{P}^1(F)} J$, we conclude that $\mathrm{Hom}_{B(F)}\big( J ,M \big)$ is isomorphic to $\mathrm{Hom}_{\mathrm{PGL}_2(F)}\big( \Gamma(t)^{\mathrm{ab}},M \big)$. Therefore, the result follows from Lemma \ref{lemma 2}.
\end{proof}

\section{Cohomology over rational representations}\label{section cohomology}

Let $M=V$ be a $\mathrm{PGL}_2(F)$-representation, i.e., assume that $M=V$ is a finite dimensional $F$-vector space with a linear action of $\mathrm{PGL}_2(F)$.
Assuming $\mathrm{char}(F)=0$, the next result describes $\mathrm{Hom}_{B(F)}\big(J, V\big)$.
This proposition, together with Theorem \ref{main teo 0}, completes the proof of Theorem \ref{main teo 1}.

\begin{prop}\label{prop final}
Assume that $\mathrm{char}(F)=0$ and let $V$ be a rational irreducible representation of $\mathrm{PGL}_2(F)$.
Then, we have $\mathrm{Hom}_{B(F)}\big(J, V\big) =\lbrace 0 \rbrace$.
\end{prop}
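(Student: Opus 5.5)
Let $\varphi\in\mathrm{Hom}_{B(F)}(J,V)$. The plan is to exploit the two pieces of $B(F)\cong F^{*}\rtimes F$ separately. The unipotent part $N=\{(1,b)\}$ acts trivially on $J$, since $(1,b)\cdot f=f$. The torus part acts on $J$ by scalar multiplication $f\mapsto af$. The first step uses only the unipotent radical. From $\varphi(f)=\varphi(n\cdot f)=n\cdot\varphi(f)$ for all $n\in N$ we get $\varphi(J)\subseteq V^{N}$. Since $\mathrm{char}(F)=0$ and $V$ is a rational irreducible representation of $\mathrm{PGL}_2(F)$, it is (up to the identification $\mathrm{PGL}_2\cong\mathrm{SO}_3$ of Lemma \ref{lemma 1}) of the form $V_{2k}=\mathrm{Sym}^{2k}(F^2)\otimes\det^{-k}$. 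For these, $V^{N}$ is the one-dimensional highest weight line $F v_0$. So $\varphi$ takes values in $Fv_0$.

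Next I use the torus. The diagonal element $d_a=\mathrm{diag}(a,1)\in B(F)$ corresponds to $(a,0)$. It acts on $v_0$ by a character $a\mapsto a^{k}$, with $k\ge 0$ determined by $V$; the exact normalization of the weight is irrelevant as long as it is a power character $a^{m}$. Writing $\varphi(f)=\lambda(f)v_0$ with $\lambda:J\to F$ additive, equivariance gives
\[
\lambda(af)=a^{m}\lambda(f)\qquad\text{for all }a\in F^{*},\ f\in J .
\]

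Now compare this with additivity. Take $a=2$: additivity gives $\lambda(2f)=2\lambda(f)$, while equivariance gives $\lambda(2f)=2^{m}\lambda(f)$. Hence $(2^{m}-2)\lambda(f)=0$. If $m\neq1$, then $2^m\ne 2$ in characteristic $0$, so $\lambda=0$. The remaining case is the weight $m=1$, and I expect this to be the main obstacle: one must rule out a nonzero additive $\lambda$ with $\lambda(af)=a\lambda(f)$, i.e.\ an $F$-linear functional on $J$. Such functionals certainly exist (e.g.\ the coefficient of $\sqrt t$), so the argument has to come from the parity of weights. For $\mathrm{PGL}_2$, the characters of $\mathrm{diag}(a,1)$ on the highest weight line of $V_{2k}$ are $a^{k}$, and for the adjoint representation ($k=1$) this is exactly $a^{1}$. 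So this is precisely the dangerous case, and the computation has to be done carefully.

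To handle it, I would return to how $B(F)$ actually acts on $U_J^{\mathrm{ab}}\cong J$ inside $\mathrm{SU}_3$, rather than use the stated formal action. Conjugating $\mathrm{u}_a(x,y)$ by the torus element of $\mathrm{SO}_3(F)$ corresponding to $\mathrm{diag}(a,1)$, namely $\mathrm{diag}(s,1,s^{-1})$ with $s^2=a$ up to the isomorphism $\mathrm{SO}_3\cong\mathrm{PGL}_2$, sends $x\mapsto sx$. Then one compares this with the weight of $\mathrm{diag}(s,1,s^{-1})$ on the highest weight vector of $V$ viewed as an $\mathrm{SO}_3$-module, which is $s^{2j}$ for $V$ of dimension $2j+1$. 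Equivariance becomes $\lambda(sf)=s^{2j}\lambda(f)$ for $s\in F^{*}$, and the argument with $s=2$ gives $(2^{2j}-2)\lambda=0$. Since $2j\neq1$, this forces $\lambda=0$ for every $j$, including the trivial representation $j=0$. Hence $\varphi=0$.

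The crux is therefore to pin down that the torus acts on $J$ through the \emph{odd} character $s$ (the root $a$, not $2a$), while acting on $V^{N}$ through even characters $s^{2j}$. I would verify this from the explicit matrices of \S\ref{section abelianization} and the isomorphism of Lemma \ref{lemma 1}.
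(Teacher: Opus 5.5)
Your first half is correct and is cleaner than the paper's argument. $N$ acts trivially on $J$, so $\varphi(J)\subseteq V^{N}=Fv_0$. The substitution $a=2$ then forces $\lambda=0$ whenever the weight $m$ of $\mathrm{diag}(a,1)$ on $v_0$ is not $1$. Since that weight is $a^{k}$ for $V=\mathrm{Sym}^{2k}(F^2)\otimes\det^{-k}$, this proves the vanishing for every irreducible $V$ except the adjoint one ($k=1$).

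The step meant to dispose of $k=1$ is wrong. Under the adjoint isomorphism $\mathrm{PGL}_2(F)\cong\mathrm{SO}_3(F)$, the element $\mathrm{diag}(a,1)$ goes to $\mathrm{diag}(a,1,a^{-1})$. It does not go to $\mathrm{diag}(s,1,s^{-1})$ with $s^2=a$, which would make no sense for non-square $a$. The element with $s^2=a$ is the $\mathrm{SL}_2$-element $\mathrm{diag}(s,s^{-1})$, and its image is $\mathrm{diag}(s^2,1,s^{-2})$. Moreover, $\mathrm{diag}(s,1,s^{-1})$ acts on the highest weight line of the $(2j+1)$-dimensional representation by $s^{j}$, not $s^{2j}$. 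For $j=1$ this is the standard representation of $\mathrm{SO}_3$, where $e_1\mapsto s\,e_1$. You measured $J$ in the $\mathrm{SO}_3$/$\mathrm{PGL}_2$ normalisation and $V$ in the $\mathrm{SL}_2$ normalisation. In any single normalisation the two weights agree for $V=\mathrm{Ad}$: conjugation by $\mathrm{diag}(s,1,s^{-1})$ multiplies the entry $x$ of $\mathrm{u}_a(x,y)$ by $s$, and it multiplies $e_1$ by the same $s$. So there is no parity obstruction.

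Consequently the gap cannot be closed, and your initial worry was right. Take $V=\mathrm{Ad}$, the trace-zero $2\times 2$ matrices under conjugation, which is $\cong \mathrm{Sym}^2F^2\otimes\det^{-1}$. Let $E=\begin{pmatrix}0&1\\0&0\end{pmatrix}$ and set $\varphi(f)=c(f)E$, where $c(f)$ is the coefficient of $\sqrt t$ in $f$. For $g=\begin{pmatrix}a&b\\0&1\end{pmatrix}$ one has $gEg^{-1}=aE$. Hence $\varphi(g\cdot f)=\varphi(af)=a\,c(f)E=g\cdot\varphi(f)$, and $\varphi\neq 0$. Thus $\mathrm{Hom}_{B(F)}(J,\mathrm{Ad})\cong\mathrm{Hom}_F(J,F)\neq 0$, so the proposition fails for the adjoint representation. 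This uses the identification $(a,b)\leftrightarrow\begin{pmatrix}a&b\\0&1\end{pmatrix}$ that you and the paper use, and by the matrix computation above the same holds for the genuine conjugation action on $U_J$.

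The paper's own proof contains the same slip. It asserts that $T(F)$ acts on $J$ with weight $1$ and on $V_{2n}$ with weight $2n$. In the normalisation where $J$ has weight $1$, however, $V_{2n}$ has weight $n$. For $n=1$ its identity $(\alpha+\beta)^{2n}\phi(x)=\alpha^{2n}\phi(x)+\beta^{2n}\phi(x)$ becomes the vacuous $(\alpha+\beta)\phi(x)=\alpha\phi(x)+\beta\phi(x)$. What either argument actually establishes is the vanishing for irreducible $V\not\cong\mathrm{Ad}$ only.
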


\begin{proof}
Let us recall that $B(F)$ decomposes as follows:
$$1 \to U(F) \to B(F) \to T(F) \to 1 ,$$
where $U(F)\cong F$ is the group of unipotent upper triangular matrices in $\mathrm{PGL}_2(F)$ and $T(F) \cong F^{*}$ is the group of diagonal matrices in $\mathrm{PGL}_2(F)$.
The Hochschild–Serre spectral sequence on the $0$-th row applied to $M=H^1 (J,V)$ implies that:
$$ H^{0}\Big(B(F), H^1 \big( J,V \big) \Big) = H^0 \Big( T(F), H^0\big( U(F), H^1(J,V) \big) \Big).$$
Hence, Lemma \ref{lemma coh is h} implies that
$$\mathrm{Hom}_{B(F)}(J^{\mathrm{ab}}, V) \cong H^0 \Big( T(F), \mathrm{Hom}_{U(F)}\big(J^{\mathrm{ab}},V\big) \Big).$$
Since $J$ is an abelian group, we have:
\begin{equation}\label{eq homology and homs}
\mathrm{Hom}_{B(F)}(J, V) \cong H^0 \Big( T(F), \mathrm{Hom}_{U(F)}\big(J,V\big) \Big).
\end{equation}
Moreover, since $U(F)$ acts trivially on $J$:
\begin{equation}\label{eq hom inv}
\mathrm{Hom}_{U(F)}\big(J,V\big) = \mathrm{Hom}\big(J, V^{U(F)} \big),  
\end{equation}
where $V^{U(F)}$ is the subspace of $V$ of $U(F)$-invariant elements.

Now, let $V$ be a rational irreducible representation of $\mathrm{PGL}_2(F)$.
It follows from \cite[\S 11, pag. 150]{fultonharris} that $V=\mathrm{Sym}^{2n} V_0$, where $n \in \mathbb{Z}_{\geq 0}$ and $V_0=F^2$ is the standard representation of $\mathrm{PGL}_2(F)$ (or $\mathrm{SL}_2(F)$).
Then $V^{U(F)}=V_{2n}$ is the highest weight space of $V$.
Hence, we get from Eqs. \eqref{eq homology and homs} and \eqref{eq hom inv} that:
\begin{equation}\label{eq homjv}
\mathrm{Hom}_{B(F)}(J, V) \cong H^0 \Big( T(F), \mathrm{Hom}\big( J, V_{2n} \big) \Big) \cong \mathrm{Hom}_{T(F)}\Big(J,V_{2n}\Big).
\end{equation}
Recall that $T(F) \cong F^{*}$ acts on $J\cong U(J)$ with a weight $1$, while it acts on $V_{2n}$ with a weight $2n$.
So, any $\phi \in \mathrm{Hom}_{T(F)}(J,V_{2n})$ satisfies $\phi(t \cdot x)= t \cdot \phi(x) = t^{2n} \phi(x)$, for all $t \in F^{*}$ and $x \in J$.
Hence, for $\alpha, \beta \in F$, we have:
$$ (\alpha+\beta)^{2n} \phi(x) = \alpha^{2n} \phi(x) + \beta^{2n} \phi(x).$$
Therefore $\phi(x)=0$, for all $x \in J$, since $\mathrm{char}(F)=0$. Hence, no non-trivial $T(F)$-equivariant map $\phi$ can exist.
We conclude that $\mathrm{Hom}_{T(F)}(J,V_{2n})=\lbrace 0 \rbrace$, whence $\mathrm{Hom}_{B(F)}\big(J, V\big) =\lbrace 0 \rbrace$.
This completes the proof of Prop. \ref{prop final} and, in turn, of Theorem \ref{main teo 1}.
\end{proof}

\small{
\section*{Acknowledgements}
This research was partially supported by ANID through Fondecyt Initiation Grant No. 11260422.
}


\bibliographystyle{plain}
\bibliography{refs.bib}

\end{document}